\newcommand{\Z}{\mathbb{Z}}
\newcommand{\Q}{\mathbb{Q}}
\newcommand{\R}{\mathbb{R}}
\newcommand{\C}{\mathbb{C}}
\newcommand{\D}{\mathcal{D}}
\newcommand{\ordp}{\textrm{ord}_p}
\newcommand{\p}{\mathfrak{p}}
\newcommand{\LL}{\mathcal{L}}
\newcommand{\sd}{\sqrt{\Delta}}
\newcommand{\ls}{\log_{*}}
\newcommand{\la}{\log{|\alpha|}}
\newcommand{\al}{\alpha}
\newcommand{\be}{\beta}
\newcommand{\ga}{\gamma}
\DeclareMathOperator{\Norm}{N}
\newtheorem*{theorem*}{Theorem}
\newtheorem{theorem}{Theorem}
\newtheorem{lemma}{Lemma}
\newtheorem{corollary}{Corollary}
\newtheorem{proposition}{Proposition}
\theoremstyle{remark}
\title[Diophantine equations of the form $u_n+u_m=w p_1^{z_1} \cdots p_s^{z_s}$]{Effective Resolution of
Diophantine equations of the form $u_n+u_m=w p_1^{z_1} \cdots p_s^{z_s}$}
\author[I. Pink]{Istv\'an Pink}
\author[V. Ziegler]{Volker Ziegler}
\thanks{The research was granted by the Austrian science found (FWF) under the project P 24801-N26.}
\subjclass[2010]{11D61,11B39,11Y50}
\keywords{Lucas sequences, S-units, automatic resoltion}
\address{I. Pink \newline
         \indent Institute of Mathematics, University of Debrecen \newline
         \indent H-4010 Debrecen, P.O. Box 12, Hungary \newline
         \indent and \newline
         \indent University of Salzburg \newline
         \indent Hellbrunnerstrasse 34/I \newline
         \indent A-5020 Salzburg, Austria}
\email{pinki\char'100science.unideb.hu; istvan.pink\char'100sbg.ac.at}
\address{V. Ziegler \newline
         \indent University of Salzburg \newline
         \indent Hellbrunnerstrasse 34/I \newline
         \indent A-5020 Salzburg, Austria}
\email{volker.ziegler\char'100sbg.ac.at}
\begin{document}

\begin{abstract}
 Let $u_n$ be a fixed non-degenerate binary recurrence sequence with positive discriminant, $w$ a fixed
non-zero integer and $p_1,p_2,\dots,p_s$ fixed, distinct prime numbers.
 In this paper we consider the Diophantine equation $u_n+u_m=w p_1^{z_1} \cdots p_s^{z_s}$ and prove under
mild technical restrictions effective finiteness results. In particular
 we give explicit upper bounds for $n,m$ and $z_1, \dots, z_s$. Furthermore, we provide a rather efficient
algorithm to solve Diophantine equations of the described type and
 we demonstrate our method by an example.
\end{abstract}

 \maketitle

 \section{Introduction}\label{Sec:Intro}

 Recently several authors considered the problem to find all indices $n$ and $m$ and exponents $z$ such that
 $$u_n+u_m=2^z,$$
 where $u_n$ is a fixed recurrence. In particular Bravo and Luca considered the cases, where $u_n$ is the
Fibonacci sequence \cite{Bravo:2016}
 and the Lucas sequence \cite{Bravo:2014} respectively. Also the case where $u_n$ is the generalized
$k$-Fibonacci sequence has been considered
 independently by Bravo, G\'omez and Luca \cite{Bravo:2016a} and Marques \cite{Marques:2016}. In \cite{Bhpr}
Bert\'ok, Hajdu, Pink and R\'abai solved completely equations of the form $u_n=2^a+3^b+5^c$, where $u_n$ is one
of the Fibonacci, Lucas, Pell and associated Pell sequences, respectively.   

 In the present paper we aim to generalize the results due to Bravo and Luca \cite{Bravo:2014,Bravo:2016} and
consider the more general Diophantine equation
\begin{equation} \label{eq:main}
u_n+u_m=w p_1^{z_1} \cdots p_s^{z_s}
\end{equation}
in non-negative integer unknowns $n,m,z_1,\dots,z_s$, where $\{u_n\}_{n\geq 0}$ is a binary non-degenerate
recurrence sequence, $p_1,\dots,p_s$ are distinct primes and
$w$ is a non-zero integer with $p_i\nmid w$ for $1\leq i \leq s$. For reasons of symmetry we assume that
$n\geq m$. Although the ideas of Bravo, G\'omez, Luca and Marques
\cite{Bravo:2014,Bravo:2016,Bravo:2016a,Marques:2016}
can be easily extended to this case, we provide two new aspects treating the title Diophantine equation.

Firstly, the main argument due to Bravo et.al. is to consider two
linear forms in (complex) logarithms and obtain by a clever trick upper bounds for $n,m$ and $z$. In our
approach we replace one of the two linear forms in complex logarithms by linear forms in $p_i$-adic
logarithms for every prime $p_i$ with $1\leq i \leq s$. The advantage in doing this is that instead of
considering a linear form in $s+2$ logarithms we only have to consider a linear form in two $p$-adic
logarithms which reduces the upper bound for $n,m,z_1,\dots,z_s$ drastically for large $s$.

The second novelty in treating this kind of problems is a consequence to our $p$-adic
approach by using a $p$-adic reduction method due to Peth\H{o} and de Weger \cite{Pethoe:1986} instead of real (and complex)
approximation lattices or the method of Baker and Davenport \cite{Baker:1969}. This
approach leads to a better performance of the reduction step and a rather efficient algorithm.

Before we state our main result, let us fix some notations. We call the sequence $\{u_n\}_{n\geq
0}=\{u_n(A,B,u_0,u_1)\}_{n\ge0}$ a binary linear recurrence sequence defined over the integers
if the relation
\begin{equation} \label{eq:bin-rec}
u_n=Au_{n-1}+Bu_{n-2} \quad (n \ge 2)
\end{equation}
holds, where $A,B\in\Z$ with $AB \neq 0$ and $u_0,u_1$ are fixed rational integers with $|u_0| + |u_1|>0$.
The polynomial $f(x)=x^2-Ax-B$ attached to recurrence \eqref{eq:bin-rec} is the so-called companion polynomial of the
sequence $\{u_n\}_{n\ge0}$ and we denote
by $\Delta=A^2+4B$ the discriminant of $f$. Let $\alpha$ and $\beta$ be the roots of the companion polynomial
$f$ and assume that $\Delta\neq 0$, then it is well known that
there exist constants $a=u_1-u_0\beta$ and $b=u_1-u_0\alpha$ such that the following formula holds
\begin{equation} \label{eq:binet}
u_n=\frac{a\alpha^n-b\beta^n}{\alpha-\beta}.
\end{equation}
The sequence $\{u_n\}_{n\ge0}$ is called {\it non-degenerate}, if $ab\alpha\beta \neq 0$ and  $\alpha/\beta$
is not a root of unity.

Throughout the paper we will assume that $u_n$ is non-degenerate and that $\Delta>0$. The last assumption
implies that the sequence $\{u_n\}_{n\geq 0}$
possesses a dominant root, which means that without loss of generality $|\alpha|>|\beta|$. Under these
assumptions and notations the main result
of our paper is now as follows:

\begin{theorem} \label{thm1}
Let $\{u_n\}_{n\geq 0}$ be a non-degenerate binary recurrence with $\Delta>0$. Let us assume that $p_i\nmid
\gcd(A,B)$ for all $1\leq i \leq s$ and furthermore let us assume that
none of the following two conditions hold
\begin{itemize}
\item $\beta=\pm 1$ and $m=\frac{\log (\beta^m 2b/a)}{\log \alpha}$.
\item $\beta=-1$ and there exists an positive odd integer $x$ and integers $t_1,\dots,t_s$ such that
\begin{equation}\label{eq:exceptional}
\frac{w(\alpha+1)}{a(\alpha^x+1)}=p_1^{-t_1}\dots p_s^{-t_s}.
\end{equation}
\end{itemize}
Then there exists an effectively computable constant $C$ depending only on $\{u_n\}_{n\geq 0}$, $w$, $s$,
$p_1,\dots,p_s$ such that
all solutions $(n,m,z_1,\dots,z_s)$ to equation \eqref{eq:main} satisfy
$$\max\{n,m,z_1,\dots,z_s\} < C.$$
\end{theorem}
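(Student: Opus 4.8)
The plan is to run the two--linear--forms strategy of Bravo and Luca, but to replace the second archimedean estimate by one $p_i$-adic estimate for each $i$, so that every linear form stays essentially binary no matter how large $s$ is. Write $P:=p_1^{z_1}\cdots p_s^{z_s}$ and, using \eqref{eq:binet}, rewrite \eqref{eq:main} as
\[
(\alpha-\beta)\,wP=(\alpha-\beta)(u_n+u_m)=a\alpha^{m}\bigl(\alpha^{n-m}+1\bigr)-b\beta^{m}\bigl(\beta^{n-m}+1\bigr).
\]
Since the sequence is non-degenerate with $|\alpha|>1$ and $|\alpha|>|\beta|$, an elementary estimate gives $c_1|\alpha|^{n}<|u_n+u_m|<c_2|\alpha|^{n}$ with effective $c_1,c_2>0$ once $n$ exceeds an explicit bound. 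As $p_i\nmid w$, this already forces $z_i\ll n$ and, crucially, $n\log|\alpha|\le\sum_i z_i\log p_i+O(1)$. Hence it is enough to bound every $z_i$ by a fixed power of $\log n$.

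\emph{Bounding $n-m$.} The identity above gives $\bigl|\,a\alpha^{n}/((\alpha-\beta)wP)-1\,\bigr|\ll|\alpha|^{m-n}+|\beta/\alpha|^{n}$, and because this quantity is $<\tfrac12$ for $n$ large the left-hand side is $|e^{\Lambda}-1|$ for an archimedean linear form $\Lambda$ in the logarithms of the positive reals $|\alpha|$, $|w|$, $p_1,\dots,p_s$, $|a/(\alpha-\beta)|$, with integer coefficients $\ll n$. One checks that $\Lambda$ vanishes only when $a\alpha^{m}=b\beta^{m}(\beta^{n-m}+1)$; for $\Delta$ not a perfect square, applying the conjugation $\alpha\leftrightarrow\beta$ (which also swaps $a\leftrightarrow b$) and multiplying the two relations yields $(\alpha\beta)^{n-m}+\alpha^{n-m}+\beta^{n-m}=0$, whence a size estimate bounds $n$ effectively; for $\Delta$ a perfect square one has $\alpha,\beta\in\Z$ and argues directly with a prime factor of $\beta$, the only residual possibility being $\beta=\pm1$, which is the first excluded condition. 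So Matveev's lower bound applies and gives $n-m\ll\log n$ (or $n\ll\log n$ outright, in which case we are done).

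\emph{The $p_i$-adic forms.} Fix $i$ and work in the completion of $\Q(\alpha)$ at a place above $p_i$. From $p_i\nmid\gcd(A,B)$ one deduces that either both roots are $p_i$-adic units, or exactly one of them is a unit and the other has positive valuation; in the latter case the non-unit root is $p_i$-adically negligible once $m$ is large, so $z_i=v_{p_i}(u_n+u_m)\ll\log(n-m)\ll\log\log n$ (and if $m$ is not large then $n-m\ll\log n$ already bounds $n$). In the former case, dividing the identity by $a\alpha^{m}(\alpha^{n-m}+1)$ gives, whenever $\beta^{n-m}+1\neq0$,
\[
z_i=v_{p_i}(a)-v_{p_i}(\alpha-\beta)+v_{p_i}\!\bigl(\alpha^{n-m}+1\bigr)+v_{p_i}\!\left(1-\frac{b}{a}\Bigl(\frac{\beta}{\alpha}\Bigr)^{m}\frac{\beta^{n-m}+1}{\alpha^{n-m}+1}\right).
\]
The argument of the last valuation is $1$ minus a product of fixed algebraic numbers with exponent vector $(1,m,1)$, and it is never $0$ (that would force $u_n+u_m=0$), so Yu's $p$-adic lower bound applies. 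Using $n-m\ll\log n$ one has $h\bigl((\beta^{n-m}+1)/(\alpha^{n-m}+1)\bigr)\ll\log n$ and $v_{p_i}(\alpha^{n-m}+1)\ll\log\log n$, and therefore $z_i\ll(\log n)^{2}$ with an effectively computable constant depending only on the data.

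\emph{The degenerate subcase and conclusion.} The only case not yet covered is $\beta^{n-m}+1=0$, i.e.\ $\beta=-1$ and $x:=n-m$ odd; then $\alpha\in\Z$ and $u_n+u_m=a\alpha^{m}(\alpha^{x}+1)/(\alpha+1)$, so $w(\alpha+1)/\bigl(a(\alpha^{x}+1)\bigr)=\alpha^{m}/P$. If some prime $q\notin\{p_1,\dots,p_s\}$ divides $\alpha$, comparing $v_q$ of the two sides bounds $m$, hence $n$; otherwise $\alpha$ is a $\{p_i\}$-unit, and either the fixed rational $w(\alpha+1)/(a(\alpha^{x}+1))$ is not a $\{p_i\}$-unit, so there is no solution with this $x$ (compare a valuation or a sign), or it is one, which is exactly the excluded condition \eqref{eq:exceptional}. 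Combining all cases gives $\sum_i z_i\log p_i\ll(\log n)^{2}$; feeding this into $n\log|\alpha|\le\sum_i z_i\log p_i+O(1)$ bounds $n$ effectively, and then $m<C$ and $z_i\ll(\log C)^{2}$ as well. I expect the main difficulty to be precisely the $p_i$-adic bookkeeping: determining the $p_i$-adic dominant root from the weak hypothesis $p_i\nmid\gcd(A,B)$, controlling the parasitic quantities $v_{p_i}(\alpha^{n-m}+1)$ and $h\bigl((\beta^{n-m}+1)/(\alpha^{n-m}+1)\bigr)$ (which is why the bound on $n-m$ must be established first), and checking that the genuinely degenerate configurations are exactly the two families excluded in the statement; the lower bounds for linear forms in logarithms, archimedean (Matveev) and $p$-adic (Yu), are invoked as black boxes.
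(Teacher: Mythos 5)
Your proof is correct and yields the same effective finiteness; it follows the same two-pronged strategy (one archimedean form, one $p_i$-adic form per prime, so that no linear form has length growing with $s$) but in the opposite order and with a different $p$-adic ingredient, and each of these choices is a genuine difference from the paper. The paper proves Proposition~\ref{prop:p-adic} first, before knowing that $n-m$ is small, so the height of $\frac{b(\beta^{n-m}+1)}{a(\alpha^{n-m}+1)}$ contributes a factor $n-m$ to the bound $z_i\ll(n-m)(\log n)^2$; only afterwards does Matveev yield $n-m\ll\log n$, landing on $n\ll(\log n)^3$. You run Matveev first, obtain $n-m\ll\log n$, and only then apply the $p$-adic estimate, so the heights entering it are already $\ll\log n$ and you reach $n\ll(\log n)^2$ directly, with the case $n=m$ absorbed rather than treated separately as in Section~\ref{Sec:n=m}. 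Replacing Bugeaud--Laurent by Yu is also a genuine simplification: Yu imposes no multiplicative-independence hypothesis, so you skip the separate analysis of the dependent case that the paper carries out via Lemma~\ref{lem:height} and the bounds on $|r|$, $|s|$. The paper's preference for Bugeaud--Laurent is motivated by the smaller numerical constants it produces for the LLL/Peth\H{o}--de~Weger reduction, not by necessity, so for the finiteness statement alone your route is the cleaner one. Your identification of the two degenerate families (vanishing of the Matveev form, and $\beta^{n-m}+1=0$ in the $p$-adic step) matches the two excluded conditions of the theorem; one small point to make explicit in the latter case is that $v_q(\alpha^x+1)=0$ for every prime $q\mid\alpha$ and odd $x\geq1$, which is exactly what makes the resulting bound on $m$ uniform in $x=n-m$.
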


Let us stress out that during the course of proof of Theorem \ref{thm1} we give a very precise description of how
to compute this bound $C$
(see for instance Proposition \ref{prop:explicit} and Section \ref{Sec:Appendix}). Moreover, we present an
easy implementable algorithm
to solve Diophantine equation~\eqref{eq:main} completely (see Section \ref{Sec:Algorithm}) under the assumptions of Theorem
\ref{thm1}.

Let us discuss the technical assumptions made in Theorem \ref{thm1}.
First, the assumption that $p_i\nmid \gcd(A,B)$ for all $1\leq i \leq s$ is to avoid technical difficulties. Using instead of
lower bounds for linear forms of $p$-adic logarithms lower bounds for linear forms of complex logarithms in Section
\ref{Sec:p-adic} one can avoid these difficulties, e.g. by adopting the method due to Bravo and Luca \cite{Bravo:2016}.
However we do not intend to further discuss this case in detail.

In the case that the first exceptional case of Theorem \ref{thm1} holds, i.e. that $\beta=\pm
1$ and $m=\frac{\log (\beta^m 2b/a)}{\log \alpha}$, equation \eqref{eq:main} can be rewritten as
$$\frac{a\alpha^n}{\alpha \pm 1}=w p_1^{z_1}\dots p_s^{z_s}.$$
It is easy to see that this Diophantine equation may have infinitely many solutions which are easy to determine. In particular,
it is possible to determine all solutions to such an equation by slightly adapting our method.
Note that $\alpha$ has to be a rational integer if $\beta=\pm 1$.
For instance, let us choose
$a=b=\beta=w=s=1$ and $\alpha=p_1=2$. Then equation \eqref{eq:main} turns into
$$ \frac{2^n-1}{2-1}+\frac{2^m-1}{2-1}=2^{z_1}$$
which has infinitely many solutions of the form $m=1$ and $n=z_1$.

Let us turn to the second exceptional case of Theorem \ref{thm1}. If $\beta=-1$ and $n-m$ is odd and let us
write $x=n-m>0$, then Diophantine equation \eqref{eq:main} turns into
$$\frac{a\alpha^n+b(-1)^n}{\alpha+1}+\frac{a\alpha^m+b(-1)^m}{\alpha+1}=\frac{a\alpha^m(\alpha^x+1)}{\alpha+1}
=w p_1^{z_1}\dots p_s^{z_s}.$$
If we insert now the assumed relation \eqref{eq:exceptional} we get
$$ \alpha^m=p_1^{z_1-t_1}\dots p_s^{z_s-t_s}$$
and it is easy to see to see that this Diophantine equation may have infinitely many solutions. Moreover, also in this case it is easy
to determine all solutions to such an equation.
In view of Theorem \ref{thm1} only the question how to find all solutions to equation \eqref{eq:exceptional} remains to
solve Diophantine equation \eqref{eq:main} completely in any case.
However, Diophantine equation \eqref{eq:exceptional} is nothing else than
$$u_x=w'p_1^{t_1}\dots p_s^{t_s}$$
with $w'=w/a$. This type of Diophantine equation has been studied by Peth\H{o} and de Weger \cite{Pethoe:1986}
and they gave a practical algorithm how to solve such equations.
Moreover, we will also discuss this kind of Diophantine equation in Section \ref{Sec:n=m}. Let us
provide an example with infinitely many solutions in this case.
For instance let $a=b=w=1$, $\beta=-1$ and $\alpha=p_1\dots p_s$. Then it is easy to see that equation \eqref{eq:main} has
infinitely many solutions of the form $n=m+1$ and $z_1=\dots=z_s=m$.

Finally let us outline the plan of the paper.
The next three sections will provide a proof of Theorem \ref{thm1}. First, we introduce some further notations
and prove some auxiliary results in Section
\ref{Sec:notation}. In Section \ref{Sec:p-adic} we use lower bounds of linear forms in two $p$-adic logarithms
due to Bugeaud and Laurent \cite{Bugeaud:1996} in order to prove some kind of gap principle.
In the Sections \ref{Sec:n=m} and \ref{Sec:n>m} we exploit ideas due to Peth\H{o} and de Weger
\cite{Pethoe:1986} and Bravo and Luca \cite{Bravo:2014,Bravo:2016} respectively.
This leads us to our upper bound $C$ in Theorem \ref{thm1}.
Due to the use of Baker's method the constant $C$ in Theorem~\ref{thm1} is usually very large. By using the
LLL-algorithm and
ideas due to de Weger \cite{deWeger:1987} and Peth\H{o} and de Weger \cite{Pethoe:1986} it is possible to reduce these bounds considerably in concrete
examples. In Section~\ref{Sec:Reduction} we will discuss the method of
de Weger \cite{deWeger:1987} and Peth\H{o} and de Weger \cite{Pethoe:1986} and show how to apply them to our problem. All together this provides an algorithm to solve
Diophantine equations of type \eqref{eq:main} completely
(see Section \ref{Sec:Algorithm}). In order to demonstrate the efficiency of our algorithm we solve the
Diophantine equation
$$u_n+u_m=2^{z_1}3^{z_2}\cdots 199^{z_{46}}$$
completely, where $u_n$ is either the Fibonacci sequence or the Lucas sequence (see Section
\ref{Sec:Example}). Since it is hard to track all the constants
which appear in the proof of Theorem \ref{thm1}, we provide in the final section a list of all constants and
their explicit determination.

 \section{Notations and auxiliary results}\label{Sec:notation}

In this section we keep the notations of the introductory section. However, before we start with the proof of
Theorem \ref{thm1} we need to introduce some more notations.
For a positive real number $x>0$ we define the function $\ls$ by
$$
\ls:\R_{>0}\rightarrow \R  \quad \ls{x}:=\max\{0 ,\log{x}\}.
$$
By $\varphi=\frac{1+\sqrt{5}}{2}$ we denote the golden ratio. Finally we write
$K=\Q(\alpha,\beta)=\Q(\sqrt{\Delta})$ for the number field corresponding
to our binary sequence $\{u_n\}_{n\geq 0}$ and define $d_K=[K:\Q]$.

In the rest of the paper there will appear constants $c_1,c_2,\cdots$ and also occasionally constants of
slightly different form, e.g. $c_{8,i}$, which are
all explicitly computable. Sometimes we do not state them explicitly in our results or proofs for aesthetic
reasons. However, Section \ref{Sec:Appendix} provides a list of all
constants and their explicit determination. We advise readers who wish to keep track of all constants and
their explicit determination to keep a bookmark at Section \ref{Sec:Appendix}.

The first lemma is an elementary result due to Peth{\H o} and de Weger \cite{Pethoe:1986}. It will be used
over and over again
in the proof of Theorem \ref{thm1}. For a proof of Lemma \ref{lem:pdw} we refer to \cite[Appendix
B]{Smart:DiGL}.

\begin{lemma} \label{lem:pdw}
Let $u,v \geq 0, h \geq 1$ and $x \in \R$ be the largest solution of $x=u+v(\log{x})^h$. Then
$$
x<\max\{2^h(u^{1/h}+v^{1/h}\log(h^hv))^h, 2^h(u^{1/h}+2e^2)^h\}.
$$
\end{lemma}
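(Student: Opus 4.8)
\textbf{Proof proposal for Lemma \ref{lem:pdw}.}

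The plan is to bootstrap the fixed-point equation $x = u + v(\log x)^h$ into a self-improving inequality and then unwind it. First I would dispose of the trivial regime: if $x \le e^2$, then certainly $x < 2^h(u^{1/h} + 2e^2)^h$ since the right-hand side is at least $(2e^2)^h \ge e^2$, so the second term in the maximum already covers this case and we may assume $x > e^2$ from now on, which in particular gives $\log x > 2$ and $\log\log x > 0$. The key analytic device is the elementary inequality $\log t \le h^{-1} t^{1/h}\cdot(\text{const})$ or, more usefully, the bound $\log t < 2 t^{1/(2h)}\log(\text{something})$ — concretely, I would use that for $t > 1$ one has $\log t \le \frac{2h}{e}\, t^{1/(2h)}$, or the sharper shape $(\log x)^h < C\, x^{1/2}$ type estimates, chosen so that substituting back into $x = u + v(\log x)^h$ yields $x < u + (\text{something})\cdot \sqrt{x}$, which is a quadratic inequality in $\sqrt{x}$ and solves explicitly.

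The main steps, carried out in order, are: (1) write $y = \log x$, so $x = e^y$ and the equation becomes $e^y = u + v y^h$; (2) since we want an upper bound on $x$, apply $\log$ to $x = u+vy^h$ to get $y = \log(u + vy^h) \le \log 2 + \max\{\log u,\ \log v + h\log y\}$, and absorb the $\log$ into a term controllable by $y^{1/2}$; (3) more directly, bound $(\log x)^h$: from $\log x < \tfrac{2h}{e} x^{1/(2h)}$ we get $(\log x)^h < (2h/e)^h \sqrt{x}$, hence $x < u + v(2h/e)^h\sqrt{x}$; (4) set $X = \sqrt{x}$, obtaining $X^2 - v(2h/e)^h X - u < 0$, so $X < \tfrac12\bigl(v(2h/e)^h + \sqrt{v^2(2h/e)^{2h} + 4u}\bigr) < v(2h/e)^h + u^{1/2}$; (5) square to get $x < \bigl(u^{1/2} + v(2h/e)^h\bigr)^2$ and then re-feed this into the original equation once more to sharpen the constant inside the logarithm from $(2h/e)^h$ to the advertised $\log(h^h v)$. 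The final cosmetic step is to package the two resulting bounds — one valid when the logarithmic term dominates, one when $u$ dominates, and one from the $x \le e^2$ case — into the single clean form $x < \max\{2^h(u^{1/h} + v^{1/h}\log(h^h v))^h,\ 2^h(u^{1/h} + 2e^2)^h\}$, using the elementary inequality $(p+q)^h \le 2^{h-1}(p^h + q^h)$ to pass between the sum-inside-the-power and the power-of-each-term shapes.

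The hard part will be step (5), the re-substitution that replaces the crude constant $(2h/e)^h$ by the tight $\bigl(v^{1/h}\log(h^h v)\bigr)^h$: this requires carefully choosing at which point to switch from the blunt estimate $\log t \le \tfrac{2h}{e}t^{1/(2h)}$ to the sharper $\log t \le \tfrac{t^{1/h}}{e\,\theta} + \log(1/\theta)^{?}$-type bound with an optimized free parameter, and then verifying that the $2e^2$ fallback term in the second branch of the maximum genuinely absorbs every error incurred along the way (in particular the case where $h^h v \le 1$, where $\log(h^h v) \le 0$ and the first branch degenerates, so the $2e^2$ term must carry the whole estimate). Since this is the verbatim statement from \cite[Appendix B]{Smart:DiGL}, I would in practice simply cite that reference for the detailed bookkeeping rather than reproduce the parameter optimization here, and use the lemma as a black box in the sequel.
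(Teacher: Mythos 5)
The paper does not prove this lemma at all: it simply states it as an elementary result of Peth\H{o} and de Weger and refers to \cite[Appendix B]{Smart:DiGL}. So your closing move --- citing Smart and using the bound as a black box --- is exactly the paper's own treatment, and the real question is whether your preliminary sketch could be completed.

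There is a genuine gap in the sketch, and it sits where you already suspect: step (5). The first-pass bound from steps (3)--(4) has the shape $x < (u^{1/2} + v(2h/e)^h)^2$, i.e.\ it involves $u^{1/2}$ and $v$ to the first power, not $u^{1/h}$ and $v^{1/h}$. One re-substitution into $x = u + v(\log x)^h$ gives $x < u + v\,2^h\bigl(\log(u^{1/2}+v(2h/e)^h)\bigr)^h$, and this does not drop below the stated right-hand side: already for $h=1$, $u=0$, $v=100$ it evaluates to roughly $200\log(200/e)\approx 998$, which exceeds the target $2\cdot 100\log 100\approx 921$. Iterating converges to the actual fixed point (about $647$ here), but no fixed finite number of iterations is guaranteed to land under the explicit expression in the lemma, so the parameter-optimization you defer is essential rather than cosmetic.

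A route that matches the advertised shape term-for-term is to reduce to $h=1$ immediately. We may assume $x\ge e^2$ (your handling of the second branch correctly absorbs the rest), so $\log x\ge 0$ and subadditivity of $t\mapsto t^{1/h}$ for $h\ge 1$ gives $x^{1/h}\le u^{1/h}+v^{1/h}\log x$. Writing $y=x^{1/h}$, $U=u^{1/h}$, $V=hv^{1/h}$ turns this into $y\le U+V\log y$, and a one-line computation gives $V\log V = v^{1/h}\log(h^h v)$, so the first branch of the claimed bound is precisely $\bigl(2(U+V\log V)\bigr)^h$. Now if the largest root $y^*$ of $y=U+V\log y$ satisfied $y^*\ge 2(U+V\log V)$, then substituting $U=y^*-V\log y^*$ would yield $y^*\le 2V\log(y^*/V)$, i.e.\ $r\le 2\log r$ with $r=y^*/V>0$; but $r-2\log r$ attains its minimum $2-2\log 2>0$ at $r=2$, a contradiction. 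Hence $y\le y^*<2(U+V\log V)$ and $x=y^h<2^h\bigl(u^{1/h}+v^{1/h}\log(h^h v)\bigr)^h$, with the $2e^2$ branch reserved for the small-$x$ regime. This avoids the $\sqrt{x}$ detour and the unresolved bookkeeping entirely.
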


Let us assume from now on that $(n,m,z_1,\dots,z_s)$ is a solution to Diophantine equation \eqref{eq:main} satisfying the
assumptions made in Theorem \ref{thm1}. The next lemma gives
lower and upper bounds for the quantity $|u_n+u_m|$, as well as, upper and lower bounds for the exponents
$z_i$ in terms of $n$.
These bounds will be utilized in the proof of Theorem \ref{thm1}.

\begin{lemma} \label{lem:bounds}
There exist constants $c_1,\dots, c_5$ such that the following holds:
\begin{enumerate}[(i)]
\item $|u_n+u_m|<c_1|\alpha|^n.$
\item If $n>c_2$, then we have $z_i<\frac{2\la}{\log{p_i}} n$ for $i=1,\dots, s$. In particular, for all
$1\leq i \leq s$ we have $z_i<\frac{2\la}{\log{2}}n$.
\item Provided that $n>c_3$, we have
$$c_4|\alpha|^{n}<|u_n+u_m|.$$
\item Provided that $n>c_3$, we have
$$n < \frac{\sum_{i=1}^s{z_i\log{p_i}}}{\log |\alpha|}+c_5.$$
\end{enumerate}
\end{lemma}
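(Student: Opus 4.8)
The plan is to establish the four estimates essentially independently, each by unwinding the Binet formula \eqref{eq:binet} and tracking explicit constants. For (i), write $u_n+u_m = \frac{a\alpha^n - b\beta^n + a\alpha^m - b\beta^m}{\alpha-\beta}$ and bound each term by the triangle inequality using $|\beta|<|\alpha|$ and $m\le n$; since $|\beta|^n\le|\beta|^m\le|\alpha|^m\le|\alpha|^n$ (or, if $|\beta|<1$, simply $|\beta|^n\le 1$), the numerator is $O(|\alpha|^n)$ with a constant $c_1$ depending only on $|a|,|b|,|\alpha-\beta|$. For (iii), the point is the reverse inequality: $|u_n+u_m|\ge \frac{|a||\alpha|^n}{|\alpha-\beta|}\bigl(1 - |b/a|\,|\beta/\alpha|^n - |\alpha|^{m-n} - |b/a|\,|\beta|^n|\alpha|^{-n}\bigr)$, and since $|\beta/\alpha|<1$ and, in the troublesome case $m=n$ we still have $|u_n+u_m|=2|u_n|$ so it is even larger, the parenthesized factor exceeds, say, $1/2$ once $n>c_3$ for an effectively computable $c_3$; this gives $c_4$.

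For (ii), combine (i) and (iii): from \eqref{eq:main} we have $p_i^{z_i}\le |w p_1^{z_1}\cdots p_s^{z_s}| = |u_n+u_m| < c_1|\alpha|^n$, hence $z_i\log p_i < \log c_1 + n\la$; absorbing $\log c_1$ into a slightly larger coefficient of $n$ (valid once $n>c_2$) yields $z_i < \frac{2\la}{\log p_i}n$, and since $p_i\ge 2$ the stated uniform bound $z_i<\frac{2\la}{\log 2}n$ follows. For (iv), take logarithms in \eqref{eq:main} the other way: $\sum_{i=1}^s z_i\log p_i = \log|u_n+u_m| - \log|w| > \log c_4 + n\la - \log|w|$ for $n>c_3$ by (iii), and rearranging gives $n < \frac{\sum_{i=1}^s z_i\log p_i}{\la} + c_5$ with $c_5 = (\log|w| - \log c_4)/\la$.

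I do not expect any genuine obstacle here; the only mild subtlety is making the cutoffs $c_2,c_3$ fully explicit, i.e.\ choosing them large enough that the error factors in the Binet expansion are bounded away from $0$ (for (iii)) and that the additive constants can be absorbed into the leading $n$-term (for (ii)). These are routine once one notes $|\alpha|>1$ (which holds because $\Delta>0$ and $|\alpha|>|\beta|$ force $|\alpha\beta|=|B|\ge 1$ and $|\alpha|>|\beta|$, so $|\alpha|>1$), so that all the geometric error terms decay and the cutoffs can be written down in closed form in terms of $|a|,|b|,|\alpha|,|\beta|,|w|$; the explicit values are deferred to Section~\ref{Sec:Appendix}.
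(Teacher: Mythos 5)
Parts (i), (ii), and (iv) follow the paper's line exactly; note that your derivation of (ii) in fact uses only (i) and equation \eqref{eq:main}, not (iii), which matches the paper. The soft spot is (iii). You bound the four terms of the Binet numerator $a\alpha^n - b\beta^n + a\alpha^m - b\beta^m$ separately, producing a bracketed factor of the form $1 - |b/a|\,|\beta/\alpha|^n - |\alpha|^{m-n} - \cdots$. When $n=m$ the term $|\alpha|^{m-n}$ equals $1$, so this bracket is nonpositive and the assertion that it ``exceeds, say, $1/2$ once $n>c_3$'' fails outright. You gesture at a repair via $|u_n+u_m|=2|u_n|$ for $n=m$, but do not carry it out, so the $n=m$ case is left unproved and the two subcases are conflated in the wording.

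The paper avoids this split by first grouping $a\alpha^n + a\alpha^m = a\alpha^n\bigl(1 + \alpha^{-(n-m)}\bigr)$ and observing that $|\alpha|\geq\varphi=\tfrac{1+\sqrt{5}}{2}$ (since $\alpha,\beta$ are roots of a monic integer quadratic with $\Delta>0$ and $B\neq 0$), whence $\bigl|1+\alpha^{-(n-m)}\bigr|\geq 1 - 1/\varphi > 0$ uniformly for all $n\geq m$, including $n=m$; the remainder of the estimate then only distinguishes $|\beta|\leq 1$ from $|\beta|>1$. Your observation $|\alpha|>1$ (from $|\alpha\beta|=|B|\geq 1$ and $|\alpha|>|\beta|$) is correct but weaker: it does not by itself give a clean lower bound on $1-1/|\alpha|$, whereas $|\alpha|\geq\varphi$ leads directly to the paper's explicit $c_4 = \tfrac{|a|(\varphi-1)}{2\varphi\sqrt{\Delta}}$. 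To make your (iii) rigorous, either adopt the grouping trick or spell out the $n=m$ case separately.
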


\begin{proof} Let us start with \textit{(i)}. We note that since $|\alpha|>|\beta|$ and $|\alpha-\beta|=\sd$,
we get by the triangle inequality
$$
|u_n|=\left| \frac{a\al^n-b\be^n}{\al-\be} \right|<\frac{|a|+|b|}{\sd}|\al|^n.
$$
Thus, the above inequality leads to
$$
|u_n+u_m| \le |u_n|+|u_m|<\frac{|a|+|b|}{\sd}|\al|^n+\frac{|a|+|b|}{\sd}|\al|^m,
$$
which by $n \geq m$ gives $|u_n+u_m|<c_1|\alpha|^n$, with $c_1=2\frac{|a|+|b|}{\sqrt{\Delta}}$.

Next, we prove \textit{(ii)}. By combining equation \eqref{eq:main} and the just proved part \textit{(i)} of the lemma
we may write for every $1 \leq i \leq s$
$$
|w|p_i^{z_i} \le |w|p_1^{z_1} \dots p_s^{z_s}=|u_n+u_m|<c_1|\al|^n.
$$
Thus, by taking logarithms the above inequality implies
$$
z_i\log{p_i} \le n\log|\al|\left(1+\frac{c_2}{n}\right),
$$
with $c_2=\frac{\ls{\frac{c_1}{|w|}}}{\la}$. Assuming that $n>c_2$ we get
\begin{equation} \label{eq:boundsii}
z_i<\frac{2n\log{|\al|}}{\log{p_i}}
\end{equation}
and the first inequality of \textit{(ii)} follows. The second inequality of \textit{(ii)} is a simple
consequence of \eqref{eq:boundsii} by noting that $p_i \geq 2$ for every $1 \le i \le s$.

For proving \textit{(iii)} we note that since $\alpha$ and $\beta$ are roots of the quadratic monic polynomial
$x^2-Ax-B$ with $A,B \in \Z$
and $\Delta=A^2+4B>0$ we have
$$|\alpha| \geq \left| \frac{-A \pm \sqrt{A^2 -4B}}2\right|\geq \frac{|A| + \sqrt{|A|^2 +4|B|}}2\geq \frac{1+\sqrt{5}}{2}=\varphi.$$
Since we assume that $n\geq m$ we find
$$
\left|1+\frac{1}{\alpha^{n-m}}\right| \geq 1-\frac{1}{\varphi},
$$
hence we obtain by formula \eqref{eq:binet}
\begin{equation} \label{boundsiii_2}
\begin{split}
|u_n+u_m| & =\frac{1}{\sd}\left|a\alpha^n\left(1+\frac{1}{\alpha^{n-m}}\right)-(b\be^n+b\be^m)\right| \ge \\
 & \geq \frac{1}{\sd}\left||a||\al|^n\left(1-\frac{1}{\varphi}\right)-|b|(|\be|^n+|\be|^m)\right|.
\end{split}
\end{equation}
Let us distinguish between the case that $|\beta|\leq 1$ and $|\beta|>1$.

In the case that $|\beta| \leq 1$ inequality \eqref{boundsiii_2} yields
\begin{equation*}
\begin{split}
|u_n+u_m| & \ge\frac{1}{\sd}\left||a||\al|^n\left(1-\frac{1}{\varphi}\right)-2|b|\right| = \\
 & =
\frac{1}{\sd}|a||\al|^n\left(1-\frac{1}{\varphi}\right)\left|1-\frac{2|b|}{|a||\al|^n\left(1-\frac{1}{\varphi}
\right)}\right|.\\
\end{split}
\end{equation*}
Let us assume that $n$ is large enough such that
$1-\frac{2|b|}{|a||\al|^n\left(1-\frac{1}{\varphi}\right)}>\frac{1}{2}$,
i.e. $n>c'=\frac{\ls{\frac{4|b|\varphi}{|a|(\varphi-1)}}}{\log{|\alpha|}}$. Thus, we obtain
$$
|u_n+u_m|>\frac{1}{2\sd}|a||\al|^n\left(1-\frac{1}{\varphi}\right)=c_4|\alpha|^n,
$$
with $c_4=\frac{|a|(\varphi-1)}{2\varphi\sd}$.

Now let us assume that $|\beta| > 1$, then by $n\geq m$ and inequality \eqref{boundsiii_2} we get
\begin{equation*}
\begin{split}
|u_n+u_m| & \ge\frac{1}{\sd}\left||a||\al|^n\left(1-\frac{1}{\varphi}\right)-2|b||\be|^n\right| = \\
 & =
\frac{1}{\sd}|a||\al|^n\left(1-\frac{1}{\varphi}\right)\left|1-\frac{2|b||\be|^n}{|a||\al|^n\left(1-\frac{1}{
\varphi}\right)}\right|.\\
\end{split}
\end{equation*}
If we assume that $n$ is large enough, say
$n>c''=\frac{\ls{\frac{4|b|\varphi}{|a|(\varphi-1)}}}{\log{\frac{|\alpha|}{|\be|}}}$, then we get that
$\left|1-\frac{2|b||\be|^n}{|a||\al|^n\left(1-\frac{1}{\varphi}\right)}\right|> \frac{1}{2}$. Thus also in
this case we have
$$
|u_n+u_m|>\frac{1}{2\sd}|a||\al|^n\left(1-\frac{1}{\varphi}\right)=c_4|\alpha|^n,
$$
which proves \textit{(iii)} in the case that $|\beta| > 1$. All together we have proved \textit{(iii)} assuming
that $n>c_3=\max\{c',c''\}$.

Finally we turn to the proof of \textit{(iv)}. Let us note that by assuming that $n$ is large enough, i.e.
$n>c_3$, we may combine the inequality from \textit{(iii)}
with equation~\eqref{eq:main}. Thus we get
$$
c_4|\al|^n<|u_n+u_m|=|w|p_1^{z_1}\dots p_s^{z_s},
$$
and by taking logarithms this yields
$$
n\log{|\al|}<\log\frac{|w|}{c_4}+\sum_{i=1}^s{z_i\log{p_i}}
$$
which proves \textit{(iv)}, with $c_5=\frac{\log{\frac{|w|}{c_4}}}{\la}$.
\end{proof}

Before stating the next result let us introduce some further notations. Let $L$ be a number field and $\eta\in
L$. We denote by $h(\eta)$ as usual
the {\it absolute logarithmic Weil height} of $\eta$, i.e.
$$
h(\eta)= \dfrac{1}{d} \left( \log \vert a_{0} \vert + \sum\limits_{i=1}^{d} \ls \left(|\eta^{(i)}|\right) \right),
$$
where $a_{0}$ is the leading coefficient of the minimal polynomial of $\eta$ over $\Z$ and the $\eta^{(i)}$-s
are the conjugates of $\eta$ in $\C$.
We will use the following well known properties of the absolute logarithmic height without special reference:
\begin{equation*}
\begin{split}
h(\eta \pm \ga) \leq & \ h(\eta)+h(\ga)+\log{2},  \\
h(\eta\ga^{\pm 1}) \leq & \ h(\eta)+h(\ga),  \\
h(\eta^\ell)=& \ |\ell|h(\eta), \qquad \textrm{for} \; \ell \in \Z.
\end{split}
\end{equation*}

In general it is a very hard problem to find lower bounds for the height of elements in a number field of
given degree. However, in the case of quadratic fields the
problem can be solved easily:

\begin{lemma}\label{lem:height}
For an algebraic number $\alpha$ of degree two we have $h(\alpha)\geq 0.24$ or $\alpha$ is a root of unity.
\end{lemma}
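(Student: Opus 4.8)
The plan is to reduce the problem to a statement about heights of algebraic integers, and then invoke a classical explicit lower bound for the height of non-reciprocal (equivalently, non-unit or non-torsion) algebraic integers of bounded degree. First I would dispose of the rational case: if $\alpha$ has degree one, then it is not of degree two, so we may genuinely assume $[\Q(\alpha):\Q]=2$. Next, I would argue that we can reduce to the case of an algebraic \emph{integer}. Indeed, if $\alpha$ is not an algebraic integer, then writing its minimal polynomial as $a_0x^2+a_1x+a_2$ with $a_0\ge 2$, the defining formula $h(\alpha)=\frac12\bigl(\log a_0+\ls(|\alpha^{(1)}|)+\ls(|\alpha^{(2)}|)\bigr)$ immediately gives $h(\alpha)\ge \frac12\log 2 > 0.34 > 0.24$. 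So the only remaining case is $\alpha$ a quadratic algebraic integer.

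For a quadratic algebraic integer $\alpha$ with conjugate $\alpha'$, we have $h(\alpha)=\frac12\bigl(\ls(|\alpha|)+\ls(|\alpha'|)\bigr)$, and $\alpha\alpha'=\pm\Norm(\alpha)\in\Z$. If $|\Norm(\alpha)|\ge 2$, then $|\alpha\alpha'|\ge 2$, hence $\ls(|\alpha|)+\ls(|\alpha'|)\ge \log|\alpha|+\log|\alpha'|=\log 2$, giving $h(\alpha)\ge \frac12\log 2 > 0.24$. If $|\Norm(\alpha)|=1$ then $\alpha$ is a unit; and if $\Norm(\alpha)=0$ then $\alpha=0$, a root of unity in the degenerate sense (but actually one should just say $\alpha=0$ has $h(\alpha)=0$ and is excluded because it is rational of degree one, so this case does not arise). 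Thus the genuinely delicate case is that of a quadratic unit that is not a root of unity. For such $\alpha$, at least one conjugate has absolute value $>1$ (otherwise all conjugates of the unit $\alpha$ lie on or inside the unit circle, forcing $\alpha$ to be a root of unity by Kronecker's theorem), so $h(\alpha)=\frac12\log\max(|\alpha|,|\alpha'|)$. This is half the logarithmic Mahler measure of $\alpha$, and the claim becomes: the Mahler measure of a quadratic non-reciprocal... — more precisely, the smallest Mahler measure of a non-cyclotomic quadratic integer. The minimum here is attained by $x^2-x-1$, whose larger root is the golden ratio $\varphi$, giving $h(\alpha)=\frac12\log\varphi=0.2406\ldots\ge 0.24$. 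To prove this cleanly I would observe that a quadratic unit satisfies $x^2\mp(\alpha+\alpha')x\pm1$ with trace $t=\alpha+\alpha'\in\Z$; if $|t|\le 1$ the polynomial $x^2-tx\pm1$ (for the norm-$(-1)$ case one must check $t=0$ gives $x^2+1$, cyclotomic; $t=\pm1$ gives $x^2\mp x-1$ with root $\varphi$) has all roots of modulus $1$ or is exactly the golden-ratio polynomial, and if $|t|\ge 2$ then the larger root has modulus $\ge\varphi$ anyway (e.g. $x^2-2x-1$ has root $1+\sqrt2>\varphi$, and $x^2-2x+1=(x-1)^2$ is not irreducible). A short finite case check over $t\in\{-2,-1,0,1,2\}$ and norm $\pm1$ settles it.

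The main obstacle is the last case — the quadratic units — since there one cannot get a bound from a norm estimate and must instead use that $\alpha$ is not a root of unity. The cleanest route is the explicit finite case analysis on the trace $t$ just described, combined with Kronecker's theorem to handle $|t|\le 1$; alternatively one can simply cite an explicit version of the Dobrowolski-type / Voutier bound, or Blanksby--Montgomery, for degree two, which gives exactly $h(\alpha)\ge\frac12\log\varphi$ with equality for the Fibonacci polynomial. I would present the elementary trace argument, as it is self-contained and avoids invoking heavier machinery for such a small degree.
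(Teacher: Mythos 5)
Your proof is correct in substance, and it follows the same opening move as the paper's proof: if $\alpha$ of degree two is not an algebraic integer, the leading coefficient $a_0\ge 2$ of its minimal polynomial already gives $h(\alpha)\ge\tfrac12\log 2>0.34$. Where the two part ways is the algebraic-integer case. The paper simply asserts that a quadratic algebraic integer (with conjugates ordered so that $|\alpha|\ge|\beta|$) satisfies $|\alpha|\ge\varphi$ and concludes $h(\alpha)\ge\tfrac12\log\varphi>0.24$; but that intermediate assertion is false for general quadratic integers --- e.g.\ $\alpha=\sqrt2$ has $|\alpha|=\sqrt2<\varphi$, even though $h(\sqrt2)=\tfrac12\log2>0.24$ so the lemma's conclusion still holds. (The assertion $|\alpha|\ge\varphi$ is correct for real quadratic \emph{units} that are not roots of unity, which is what the paper actually needs in its applications, but the lemma is stated and proved for arbitrary degree-two $\alpha$.) Your argument closes that gap: you split the integer case according to $|\Norm(\alpha)|$. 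When $|\Norm(\alpha)|\ge2$, the estimate $\ls|\alpha|+\ls|\alpha'|\ge\log|\alpha|+\log|\alpha'|=\log|\Norm(\alpha)|\ge\log2$ finishes it; when $|\Norm(\alpha)|=1$, $\alpha$ is a unit and a finite check on the trace $t\in\{-2,\dots,2\}$, together with Kronecker's theorem, shows either $\alpha$ is a root of unity or the larger conjugate has modulus at least $\varphi$, so $h(\alpha)\ge\tfrac12\log\varphi\approx0.2406$. One small slip to fix: in your enumeration, norm $-1$ with $t=0$ gives $x^2-1$ (reducible), not $x^2+1$; the latter is the norm $+1$, $t=0$, cyclotomic case. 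With that corrected, your proof is a complete and rigorous version of what the paper sketches.
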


\begin{proof}
Assume that the conjugate of $\alpha$ is $\beta$ and that $|\alpha|\geq |\beta|$.
If $\alpha$ is not an algebraic integer, then the leading coefficient $a_0$ of its minimal polynomial
satisfies $a_0\geq 2$ and we obtain
$$h(\alpha)\geq \frac 12 \log|a_0|\geq \frac{\log 2}2 > 0.34.$$
On the other hand if $\alpha$ is an algebraic integer, then $|\alpha|\geq \varphi$ and
$$h(\alpha)\geq \frac 12 (\ls|\alpha|+\ls|\beta|)\geq \frac{\log \varphi}2>0.24.$$
\end{proof}

\section{The Application of $p$-adic methods}\label{Sec:p-adic}

Our next goal is to prove an upper bound for $n$ in terms of $n-m$ and $\log n$. Therefore we utilize linear
forms in $p$-adic logarithms. In particular, we use the results due to
Bugeaud and Laurent \cite{Bugeaud:1996}. This result has the drawback that we have to assume that the algebraic numbers 
involved in the $p$-adic logarithms are linearly independent in contrast to the results due to
Yu \cite{Yu:1999, Yu:2013} which can be applied without this assumption. Thus dealing with the linearly
dependent case causes some technical difficulties. Nevertheless we use the
result due to Bugeaud and Laurent \cite{Bugeaud:1996} since their result yields rather small upper bounds and
dealing with the technical difficulties will pay off when we come to
the LLL-reduction step.

For a prime number $p$ denote by $\Q_p$ the field of $p$-adic numbers with the standard $p$-adic valuation
$\ordp$.
Further, let $\al_1,\al_2$ be algebraic numbers over $\Q$ and we regard them as elements of the field
$L=\Q_p(\al_1,\al_2)$.
We equip the field $L$ with the ultrametric absolute value $|x|_p=p^{-\nu_p(x)}$,
where $\nu_p$ denotes the unique extension to $L$ of the standard $p$-adic valuation $\ordp$ over $\Q_p$
normalized by $\nu_p(p)=1$ (we set $\nu_p(0)=+\infty$). Note, that for every non-zero $\delta \in L$ we have
$$
\nu_p(\delta)=\frac{\ordp(\Norm_{L/\Q_p}(\delta))}{d_{L}},
$$
where $d_{L}=[L:\Q_p]$ is the degree of the field extension $L/\Q_p$ and $\Norm_{L/\Q_p}(\delta)$ is the
norm
of $\delta$ with respect to $\Q_p$. Denote by $e$ the ramification index of the local field extension $L/\Q_p$
and by $f$ the residual degree of this extension.
Put
$$
\D=\frac{[\Q(\al_1,\al_2):\Q]}{f}
$$
and let
$$
h'(\alpha_i) \ge \max\left\{h(\al_i),\frac{\log{p}}{\D} \right\}, \ (i=1,2).
$$
Before stating the above mentioned result of Bugeaud and Laurent we introduce $p$-adic logarithms (see e.g.
\cite[Chapter 5.1]{Washington:CF} for a short but thorough treatment of this topic).
Let $\mathcal K$ be a field complete with respect to $|\cdot|_p$ and containing $\Q_p$. There exists a
function $\log_p(x)$ defined on all
of $\mathcal K$ such that $\log_p(xy)=\log_p(x)+\log_p(y)$. Moreover, for every $\xi\in\mathcal K$ with
$|\xi-1|_p<p^{-1/(p-1)}$
we can compute the $p$-adic logarithm by
$$\log_p(\xi)=-\sum_{i=1}^\infty \frac{(1-\xi)^i}i.$$
Moreover we have
\begin{equation}\label{eq:p-adic-log} |\log_p(\xi)|_p=|\xi -1|_p \end{equation}
or equivalently
\begin{equation} \label{eq:p-adic-log-2} \nu_p(\log_p(\xi)) = \nu_p(\xi-1) \end{equation}
provided that $|\xi-1|_p<p^{-1/(p-1)}$. Note that in the case that $|\xi-1|_p=p^{-1/(p-1)}$ we can replace in
equation \eqref{eq:p-adic-log} the $=$ by a $\leq$ sign,
(or equivalently in equation \eqref{eq:p-adic-log-2} the $=$ by a $\geq$ sign).

With these notations at hand, let us state a result due to Bugeaud and Laurent \cite[Corollary
1]{Bugeaud:1996}):

\begin{theorem} \label{Th:Bugeaud}
Let $b_1,b_2$ be positive integers and suppose that $\al_1$ and $\al_2$ are multiplicatively independent
algebraic numbers such that $\nu_p(\al_1)=\nu_p(\al_2)=0$. Put
\begin{equation}
\label{eq:bprime}
b':=\frac{b_1}{\D h'(\alpha_2)}+\frac{b_2}{\D h'(\alpha_1)}.
\end{equation}
Then we have
\begin{equation}
\label{eq:lowpadicbound}
\nu_p(\alpha_1^{b_1}\alpha_2^{b_2}-1)\leq \frac{24 p (p^f-1) \D^4}{(p-1)(\log
p)^4}B^2h'(\alpha_1)h'(\alpha_2),
\end{equation}
with
\begin{equation}
\label{eq:B}
B:=\max\left\{\log b'+\log\log p+0.4,10,\frac{10\log p}{\D}\right\}.
\end{equation}
\end{theorem}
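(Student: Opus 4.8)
Theorem~\ref{Th:Bugeaud} is quoted verbatim as Corollary~1 of Bugeaud and Laurent \cite{Bugeaud:1996}, so the ``proof'' is really a sketch of how such a $p$-adic Baker-type bound is produced; in the present paper one simply invokes it. Were I to reconstruct it, I would follow the interpolation-determinant method (the $p$-adic analogue of Laurent's approach to linear forms in two complex logarithms), which is what yields the clean numerical constant and the $B^{2}$ (rather than $B^{2}\log B$) dependence.

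\emph{Set-up and reduction to a small $p$-adic linear form.} Put $\Lambda=\alpha_{1}^{b_{1}}\alpha_{2}^{b_{2}}-1$ and assume, for contradiction, that $\nu_{p}(\Lambda)$ exceeds the right-hand side of \eqref{eq:lowpadicbound}. Since $\nu_{p}(\alpha_{1})=\nu_{p}(\alpha_{2})=0$, both $\alpha_{i}$ are $p$-adic units in $L$; after replacing $\alpha_{i}$ by $\alpha_{i}^{q}$ with $q=p^{f}-1$ (and, if necessary, by a further $p$-power) so as to enter the disc of convergence of $\log_{p}$ — this normalization is the origin of the factors $p$ and $p^{f}-1$ in the bound — one has $\log_{p}\alpha_{i}$ well defined and, by \eqref{eq:p-adic-log-2}, $\nu_{p}(\Lambda)=\nu_{p}(b_{1}\log_{p}\alpha_{1}+b_{2}\log_{p}\alpha_{2})$. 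Hence the hypothesis forces the $p$-adic linear form $\Lambda_{0}:=b_{1}\log_{p}\alpha_{1}+b_{2}\log_{p}\alpha_{2}$ to be $p$-adically extremely small, i.e. $b_{1}\log_{p}\alpha_{1}\approx-b_{2}\log_{p}\alpha_{2}$ in the $p$-adic topology.

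\emph{The interpolation determinant.} Choose integer parameters $L_{1},L_{2},T$ with $L_{1}L_{2}T$ of size roughly the right-hand side of \eqref{eq:lowpadicbound} divided by $h'(\alpha_{1})h'(\alpha_{2})$, and consider the functions $z\mapsto\alpha_{i}^{z}$ (interpreted $p$-adically via $\log_{p}$) evaluated at the lattice points $z=\lambda_{k}$ lying in a box adapted to the ratio $b_{1}:b_{2}$; the quantity $b'$ of \eqref{eq:bprime} is precisely the combination of $b_{1},b_{2}$ that arises when one balances the two directions of this box, and $B$ of \eqref{eq:B} collects $\log b'$ together with the unavoidable $\log p$ and absolute-constant contributions. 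Form the square matrix with entries $\alpha_{1}^{\ell_{1}\lambda_{k}}\alpha_{2}^{\ell_{2}\lambda_{k}}$ and let $\Delta_{0}=\det\left(\alpha_{1}^{\ell_{1}\lambda_{k}}\alpha_{2}^{\ell_{2}\lambda_{k}}\right)$. On one hand, $\Delta_{0}$ is an algebraic number whose house and denominator are controlled by $h'(\alpha_{1})$, $h'(\alpha_{2})$, $\D$ and the parameters, so if $\Delta_{0}\neq0$ the product formula (Liouville's inequality) gives a \emph{lower} bound for $\nu_{p}(\Delta_{0})$. On the other hand, Taylor-expanding each entry about the near-equality $b_{1}\log_{p}\alpha_{1}\approx-b_{2}\log_{p}\alpha_{2}$ and applying the $p$-adic Schwarz lemma produces an \emph{upper} bound for $\nu_{p}(\Delta_{0})$ which, because $\nu_{p}(\Lambda)$ is assumed so large, beats the Liouville bound once the parameters are chosen optimally — contradiction.

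\emph{The main obstacle.} The genuinely hard point is the zero estimate guaranteeing $\Delta_{0}\neq0$ (or the treatment of the degenerate case): a vanishing determinant would force a non-trivial multiplicative relation between $\alpha_{1}$ and $\alpha_{2}$ (a rank statement about the exponent matrix, proved by a combinatorial/linear-algebra argument on the lattice of relations), contradicting the assumed multiplicative independence — this is exactly where that hypothesis enters, and it is why Theorem~\ref{Th:Bugeaud} requires it while Yu's estimates do not. The remaining labour is quantitative bookkeeping: propagating constants through the $p$-adic Schwarz lemma, the parameter optimization, and the Liouville inequality so as to arrive at the precise shape $\dfrac{24p(p^{f}-1)\D^{4}}{(p-1)(\log p)^{4}}B^{2}h'(\alpha_{1})h'(\alpha_{2})$ with the factor as small as $24$; for these final constants I would simply cite \cite[Corollary~1]{Bugeaud:1996} rather than re-optimize.
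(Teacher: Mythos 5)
The paper does not prove this statement: it is quoted verbatim, with citation, as Corollary~1 of Bugeaud and Laurent \cite{Bugeaud:1996}, and your proposal correctly recognizes this and treats it accordingly. Your sketch of the interpolation-determinant method (the $p$-adic Laurent-type argument: normalization by $p(p^f-1)$ to enter the domain of $\log_p$, a determinant whose $p$-adic size is bounded above by a Schwarz lemma and below by Liouville, a zero estimate invoking multiplicative independence, and parameter optimization yielding the $B^2$ dependence) is an accurate high-level description of the actual proof in that reference, so there is nothing to correct.
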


We only use this result in the case where $L=\Q_p(\sqrt{\Delta})$, i.e. $1 \le d_L,\D,e,f \leq 2$ and obtain
as a simple Corollary

\begin{corollary}\label{cor:Bugeaud}
 Let $B'=\max\{b_1,b_2,p^{10},e^{10}\}$ and with the notations and assumptions of Theorem \ref{Th:Bugeaud} in
force we have
 $$
 \nu_p(\alpha_1^{b_1}\alpha_2^{b_2}-1) < C_1(p) h'(\alpha_1)h'(\alpha_2) (\log B')^2,
 $$
where
$$C_1(p)=\frac{947 p^f}{(\log p)^4}.$$
\end{corollary}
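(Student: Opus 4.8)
The plan is to derive Corollary~\ref{cor:Bugeaud} directly from Theorem~\ref{Th:Bugeaud} by simply specialising the various field-theoretic parameters to the quadratic situation $L=\Q_p(\sqrt{\Delta})$ and then crudely bounding the numerical constant. First I would record the constraints coming from $[L:\Q_p]\le 2$: since $d_L=ef$ and $d_L\in\{1,2\}$, we get $1\le e,f\le 2$ and, because $\D=[\Q(\alpha_1,\alpha_2):\Q]/f$ with $[\Q(\alpha_1,\alpha_2):\Q]\le 2$, also $1\le\D\le 2$. Hence $\D^4\le 16$ and $p(p^f-1)/(p-1)\le p^f$ (using $p^f-1\le (p-1)p^{f-1}\cdot\frac{p}{p-1}$, or more simply $\frac{p(p^f-1)}{p-1}\le p\cdot p^{f-1}\cdot\frac{p-1}{p-1}\cdot$something — I will just bound $\frac{p(p^f-1)}{p-1}<\frac{p\cdot p^f}{p-1}\le 2p^f$ since $p\ge 2$). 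Combining these, the prefactor $\frac{24p(p^f-1)\D^4}{(p-1)(\log p)^4}$ is at most $24\cdot 2p^f\cdot 16/(\log p)^4 = 768\,p^f/(\log p)^4$, which is comfortably below $947\,p^f/(\log p)^4$; the slack will absorb the difference between $B^2$ and $(\log B')^2$ below.

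Next I would compare $B$ with $\log B'$, where $B'=\max\{b_1,b_2,p^{10},e^{10}\}$. Since $\D\le 2$ and $e\ge 1$, the term $\frac{10\log p}{\D}\le 10\log p=\log(p^{10})\le\log B'$; likewise $10=\log(e^{10})\le\log B'$. For the remaining term I must bound $\log b'+\log\log p+0.4$. From \eqref{eq:bprime}, using $h'(\alpha_i)\ge\frac{\log p}{\D}\ge\frac{\log p}{2}$ together with $\D\ge 1$, one gets $b'=\frac{b_1}{\D h'(\alpha_2)}+\frac{b_2}{\D h'(\alpha_1)}\le\frac{2b_1}{\log p}+\frac{2b_2}{\log p}\le\frac{4\max\{b_1,b_2\}}{\log p}$, so that $\log b'+\log\log p\le\log(4\max\{b_1,b_2\})=\log 4+\log\max\{b_1,b_2\}$. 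Therefore $\log b'+\log\log p+0.4\le\log\max\{b_1,b_2\}+\log 4+0.4\le\log B'+2$. Putting the three cases together, $B\le\log B'+2$. For the reduction step we also know $\log B'\ge\log p^{10}=10\log p\ge 10\log 2>6.9$, hence $B\le\log B'+2\le\log B'(1+2/6.9)<1.3\log B'$, and so $B^2<1.7(\log B')^2$.

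Finally I would assemble: the right-hand side of \eqref{eq:lowpadicbound} is at most $\frac{768\,p^f}{(\log p)^4}\cdot 1.7(\log B')^2\cdot h'(\alpha_1)h'(\alpha_2)<\frac{1306\,p^f}{(\log p)^4}(\log B')^2 h'(\alpha_1)h'(\alpha_2)$. This exceeds the claimed $947$, so the crude estimates need slight sharpening — the cleanest fix is to bound the prefactor more carefully (e.g. $\frac{p(p^f-1)}{(p-1)}\le p^f\cdot\frac{p}{p-1}\le 2p^f$ is wasteful; for $p\ge 2$ one has $\frac{p(p^f-1)}{p-1}\le p^f+\dots$, and for $f\le 2$ a direct check gives $\frac{p(p^f-1)}{p-1}=p$ when $f=1$ and $p(p+1)$ when $f=2$, both $\le p^f\cdot 2$ but in the $f=1$ case only $p=p^f$, saving a factor $2$), and to note that the factor $24\cdot 16=384$ against $947$ leaves room for $B^2/(\log B')^2<2.46$, which is guaranteed once $\log B'\ge 10\log 2$. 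I expect the only real obstacle to be this bookkeeping: one must choose the intermediate constants so that $384\cdot\kappa\le 947$ where $\kappa$ is the bound on $B^2/(\log B')^2$, and verify $\kappa<2.46$ holds uniformly, which follows from $B\le\log B'+2$ and $\log B'\ge 6.9$. With the constant $947$ already fixed in the statement, the proof is just a matter of carrying these elementary inequalities through in the right order.
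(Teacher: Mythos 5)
You take essentially the same route as the paper---specialize Theorem~\ref{Th:Bugeaud} to the quadratic situation using $\D,f\le 2$, bound the prefactor by $768\,p^f/(\log p)^4$, compare $B$ with $\log B'$, and multiply---and your prefactor estimate is identical to the paper's. Where you go astray is in the bound on $b'$: you use $h'(\alpha_i)\ge\frac{\log p}{2}$ together with $\D\ge 1$ to get $\frac{1}{\D h'(\alpha_i)}\le\frac{2}{\log p}$, which throws away a factor of $2$. The sharp route, and the one the paper takes, is to multiply the defining inequality $h'(\alpha_i)\ge\frac{\log p}{\D}$ through by $\D$ to get $\D h'(\alpha_i)\ge\log p$ directly, whence $b'\le\frac{b_1+b_2}{\log p}\le\frac{2B'}{\log p}$, $B\le\log(2e^{0.4}B')\le 1.11\log B'$, and $768\cdot(1.11)^2<947$. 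Your factor-$4$ bound on $b'$ gives $B\le\log B'+2$, hence $\kappa\le 1.7$ and $768\cdot 1.7\approx 1306>947$, as you honestly observe.

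Your proposed repair---that the $24\cdot 16=384$ leaves room for $\kappa<2.46$---is not justified as stated, because a prefactor of $384\,p^f/(\log p)^4$ rather than $768\,p^f/(\log p)^4$ requires ruling out the case $\D=2$, $f=2$ simultaneously. That exclusion is in fact available here: $\D f=[\Q(\alpha_1,\alpha_2):\Q]\le 2$ by the very definition of $\D$, so the worst configurations are $(\D,f)=(2,1)$ giving $24\cdot 16\cdot p=384\,p^f$ and $(\D,f)=(1,2)$ giving $24\,p(p+1)\le 36\,p^f$, and $384\,p^f/(\log p)^4$ bounds both. But you only record $1\le\D\le 2$ and $1\le f\le 2$ separately, never the joint constraint $\D f\le 2$, so the $384$ is not supported by what you wrote; under your stated assumptions alone, $(\D,f)=(2,2)$ could push the prefactor up to $576\,p^f/(\log p)^4$, which combined with your $\kappa\le 1.7$ still overshoots. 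In short: the plan is right and the arithmetic is all elementary, but as written the proof does not reach $947$, and either of your two ways out (the tighter $b'$ bound, or the tighter prefactor) needs one more step to be made rigorous.
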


\begin{proof}
 First let us note that since $f,\D \leq 2$ one easily deduces that
 $$\frac{24 p (p^f-1) \D^4}{(p-1)(\log p)^4} < \frac{768 p^{f}}{\log^4 p}.$$
 Moreover,
 $$b'= \frac{b_1}{\D h'(\alpha_2)}+\frac{b_2}{\D h'(\alpha_1)} \leq \frac{b_1+b_2}{\log p},$$
 and since we assume that $B'\geq \max\{p^{10},e^{10}\}$ we get
 $$B=\max\left\{\log b'+\log\log p+0.4,10,\frac{10\log p}{\D}\right\} \leq \log (2 e^{0.4} B')\leq 1.11 \log
B'.$$
 Putting everything together yields the Corollary.
\end{proof}

Note that it is rather easy to compute $e$ and $f$. Indeed, let $\Delta_0$ be the square-free part of
$\Delta$, then we have $e=2$ if and only if $p|\Delta_0$ and
$f=2$ if and only if $\left(\frac{\Delta_0}p\right)=-1$, where $\left(\frac{\Delta_0}p\right)$ denotes the Legendre symbol.

\begin{proposition}\label{prop:p-adic}
 Let us assume that $n>m>3$, $P=\max_{1\leq i \leq s} \{p_i\}$ and that
 $$n>c_6:=\max\left\{c_3,17.5 \log|\alpha|(\max\{\log|2a\alpha|,\log|2b\beta|\}+0.24),P^{10},e^{10}\right\}.$$
 Then there exist constants $c_7$ and $c_{8,i}$ with $i=1,\dots,s$ such that
 $$n<c_7(n-m)(\log n)^2\qquad \text{and} \qquad  z_i<c_{8,i}(n-m)(\log n)^2,$$
 with $1 \leq i \leq s$.
\end{proposition}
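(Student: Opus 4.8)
The plan is to isolate a $p_i$-adic linear form in two logarithms from equation \eqref{eq:main} and apply Corollary \ref{cor:Bugeaud}, then combine the resulting lower bound for $\nu_{p_i}(\text{form}-1)$ with the elementary upper bound $z_i\log p_i\le n\log|\alpha|(1+c_2/n)$ from Lemma \ref{lem:bounds}(ii). First I would rewrite the Binet formula so that the dominant term is visible: using $u_n=\frac{a\alpha^n-b\beta^n}{\alpha-\beta}$ and $n\ge m$, equation \eqref{eq:main} reads
$$
a\alpha^m(\alpha^{n-m}+1)-b(\beta^n+\beta^m)=(\alpha-\beta)\,w\,p_1^{z_1}\cdots p_s^{z_s}.
$$
Dividing through by $a\alpha^n$ (legitimate since $a\alpha\ne0$) isolates $1$ plus small correction terms against the right-hand side, so one gets an expression of the shape
$$
\frac{(\alpha-\beta)w}{a}\,\alpha^{-n}p_1^{z_1}\cdots p_s^{z_s}
=\Big(1+\alpha^{-(n-m)}\Big)\Big(1-\frac{b(\beta^n+\beta^m)}{a\alpha^m(\alpha^{n-m}+1)}\Big).
$$
The delicate bookkeeping is that the factor $(1+\alpha^{-(n-m)})$ is \emph{not} small — it is the source of the $(n-m)$ on the right of the claimed inequality — while the second factor is $1+O(|\beta/\alpha|^m)$ and genuinely $p_i$-adically close to $1$ for each $p_i$ (using $p_i\nmid\gcd(A,B)$, which guarantees $\nu_{p_i}(\alpha)=\nu_{p_i}(\beta)=0$ after passing to $L=\mathbb{Q}_{p_i}(\sqrt\Delta)$, so that $\nu_{p_i}$ of $\alpha,\beta$ and of $\alpha^x+1$-type factors behave well).

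Next I would apply Corollary \ref{cor:Bugeaud} with $p=p_i$, $\alpha_1=\alpha$, $\alpha_2$ a suitable algebraic number built from $\alpha,\beta,w,a$ absorbing the $(\alpha^{n-m}+1)$ factor (this is where multiplicative independence of $\alpha_1,\alpha_2$ must be checked, and where the two exceptional cases of Theorem \ref{thm1} are exactly the configurations that destroy it — in the body of the paper this would need a short separate argument, but here I may assume we are outside those cases), $b_1=n$ and $b_2=1$, so that $B'=\max\{n,1,P^{10},e^{10}\}=n$ under the hypothesis $n>c_6\ge\max\{P^{10},e^{10}\}$. The heights $h'(\alpha_1),h'(\alpha_2)$ are bounded by absolute constants times $\max\{\log|2a\alpha|,\log|2b\beta|\}$ plus the $0.24$ coming from Lemma \ref{lem:height} (this is precisely the shape of the term appearing in $c_6$), so Corollary \ref{cor:Bugeaud} yields
$$
\nu_{p_i}\!\Big(\text{form}\Big)<C_1(p_i)\,h'(\alpha_1)h'(\alpha_2)(\log n)^2=:c(\log n)^2
$$
for an explicit constant $c$ depending only on $\{u_n\}$, $w$ and $p_i$.

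On the other hand, the left-hand side $\frac{(\alpha-\beta)w}{a}\alpha^{-n}p_1^{z_1}\cdots p_s^{z_s}$ has $p_i$-adic valuation exactly $z_i$ up to a bounded error (valuations of the fixed quantities $\alpha-\beta$, $w$, $a$, $\alpha$), while by \eqref{eq:p-adic-log-2} the valuation of $\log_{p_i}$ of the right-hand factor $1+O(|\beta/\alpha|^m)$ is bounded below by a constant times $m\ge n-(n-m)$; combining and being careful about which side dominates gives a linear relation of the form $z_i - c' < c(\log n)^2 + c''(n-m)$ or directly $z_i<c_{8,i}(n-m)(\log n)^2$ after using $n>c_6$ to absorb additive constants into multiplicative ones. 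Finally, feeding $z_i\log p_i\le n\log|\alpha|(1+c_2/n)\le 2n\log|\alpha|$ (Lemma \ref{lem:bounds}(ii), valid since $n>c_6\ge c_2$) into Lemma \ref{lem:bounds}(iv) — or more simply bounding $n\log|\alpha|<\log(|w|/c_4)+\sum z_i\log p_i$ and then using the bound just obtained for each $z_i$ — converts the $z_i$-bound into $n<c_7(n-m)(\log n)^2$ with $c_7$ explicit. The main obstacle is the linearly dependent case: verifying that outside the two exceptional configurations of Theorem \ref{thm1} the numbers $\alpha$ and the auxiliary $\alpha_2$ are multiplicatively independent, and handling separately the degenerate subcases $\beta=\pm1$ (where $\alpha\in\mathbb{Z}$ and the $p_i$-adic logarithm machinery partly collapses) — this requires the careful case analysis that the exceptional hypotheses were designed to permit, and is where most of the real work of the proof lies.
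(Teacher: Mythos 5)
Your plan has the right architecture — isolate a $p_i$-adic linear form in two logarithms, apply Bugeaud--Laurent, and feed the resulting bound on $z_i$ into Lemma~\ref{lem:bounds}(iv) — and the identity you derive is the paper's equation~\eqref{eq:p-adic-form} up to regrouping (your second right-hand factor is $-\Lambda$ in the paper's notation). But the execution has two serious flaws. The form to which the corollary must be applied is the one sitting inside your second right-hand factor, namely $1-\frac{b(\beta^{n-m}+1)}{a(\alpha^{n-m}+1)}(\beta/\alpha)^m$, with $\alpha_1=\frac{b(\beta^{n-m}+1)}{a(\alpha^{n-m}+1)}$, $\alpha_2=\beta/\alpha$, $b_1=1$, $b_2=m$; the $(n-m)$ factor in the claimed inequality enters because $h'(\alpha_1)=O(n-m)$. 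Your choice $\alpha_1=\alpha$, $b_1=n$, $b_2=1$ does not correspond to any two-variable form in your identity (your left-hand side still contains $p_1^{z_1}\cdots p_s^{z_s}$, so it is not $\alpha_1^{b_1}\alpha_2^{b_2}$). Worse, your statement that the $p_i$-adic valuation of $\log_{p_i}$ of the factor $1+O(|\beta/\alpha|^m)$ is bounded below by a constant times $m$ is an archimedean/$p$-adic category error: the archimedean smallness of $(\beta/\alpha)^m$ carries no information about $\nu_{p_i}$. The correct inequality extracted from the identity is $z_i \le \nu_{p_i}(\alpha^{n-m}+1)+\nu_{p_i}(\Lambda)+O(1)$, with the first term bounded crudely by $(n-m)\log|2a\alpha|/\log p_i$ (since $a(\alpha^{n-m}+1)$ is an algebraic integer) and the second by Bugeaud--Laurent.

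More importantly, you cannot dismiss the multiplicatively dependent case by invoking the exceptional hypotheses of Theorem~\ref{thm1}. Those hypotheses rule out vanishing of the \emph{archimedean} linear form treated in Lemma~\ref{lem:vanishing}; they do not prevent $\frac{b(\beta^{n-m}+1)}{a(\alpha^{n-m}+1)}$ and $\alpha/\beta$ from being multiplicatively dependent, which depends on $n-m$ and can genuinely occur for particular values. That case must be handled head-on: one shows any relation $\frac{a(\alpha^{n-m}+1)}{b(\beta^{n-m}+1)}=(\alpha/\beta)^{r/s}$ forces $|s|<4.2\log|\alpha|$ via Lemma~\ref{lem:height} and $|r|=O((n-m)\log|\alpha|\max\{\log|2a\alpha|,\log|2b\beta|\})$, then bounds $\nu_{p_i}(\delta^{-ms-r}-1)$ through $\nu_{p_i}(\log_{p_i}\delta)+\nu_{p_i}(ms+r)$ — a separate argument of non-negligible length which is also the true source of the term $17.5\log|\alpha|(\max\{\log|2a\alpha|,\log|2b\beta|\}+0.24)$ in $c_6$, which you have misattributed to a height bound in the Bugeaud--Laurent step.
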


\begin{proof}
 First, let us note that the first inequality is a direct consequence of the second inequality and Lemma
\ref{lem:bounds} \textit{(iv)} and $c_7$ can be easily computed
 once we have found the constants $c_{8,i}$ (see Section \ref{Sec:Appendix}). Therefore we are left to prove
the second inequality for each index $1\leq i \leq s$.
 Let us fix the index $i$. In order to avoid an overload of indices we drop the index $i$ for the rest of the
proof, i.e. we write $p=p_i$, $c_{8}=c_{8,i}$ and so on.
 Further, we will work in the field $L=\Q_p(\alpha,\beta)=\Q_p(\sqrt{\Delta})$.
 Since we assume $p\nmid \gcd(A,B)$ we have that $\nu_p(\alpha)=0$ or $\nu_p(\beta)=0$. Without loss of
generality we may
 assume that $\nu_p(\alpha)=0$, since we make no use of the fact that $|\alpha|>|\beta|$ in the whole proof of
the proposition.

 Let us consider equation \eqref{eq:main} and rewrite it as follows
 \begin{equation}\label{eq:p-adic-form}
\frac{b(\be^{n-m}+1)}{a(\al^{n-m}+1)}\left(\frac{\be}{\al}\right)^m-1=-w(\al-\be)\al^{-m}a^{-1}(\al^{n-m}+1)^{
-1}p_1^{z_1}\dots p_s^{z_s}
 \end{equation}
Let us denote by $\Lambda$ the left hand side of \eqref{eq:p-adic-form} and let us compute $\nu_p(\Lambda)$ by
considering the right hand side of equation \eqref{eq:p-adic-form}.
Since by assumption $\nu_p(\alpha)=\nu_p(w)=0$ we obtain
$$
\nu_p(\Lambda)=z-\nu_p(a(\al^{n-m}+1))+\nu_p(\alpha-\beta).
$$
Let us note that $\alpha-\beta$ is an algebraic integer, hence $\nu_p(\alpha-\beta)\geq 0$. Moreover, we have
\begin{align*}
\nu_p(a(\al^{n-m}+1))\leq & \frac{\log |a|+\log 2+(n-m)\log|\alpha|}{\log p} \\
\leq & (n-m)\frac{\log |2a\alpha|}{\log p}:=c_9(n-m).
\end{align*}
and obtain
\begin{equation}\label{eq:p-adic-lower}
 \nu_p(\Lambda)\geq z-c_9(n-m).
\end{equation}

Next, we will apply Corollary \ref{cor:Bugeaud} in order to bound $\nu_p(\Lambda)$ from above. Therefore let
us estimate the height of $\frac{b(\be^{n-m}+1)}{a(\al^{n-m}+1)}$.
Note that $\alpha$ and $\beta$ (resp. $a$ and $b$) are either rational integers or conjugate algebraic
integers in a real quadratic field. Therefore we conclude that
$h\left(\frac{\alpha}{\beta}\right)\leq \max\{\log |\alpha|,\log|\beta|\}$ provided that $\alpha$ and $\beta$
are either rational integers or conjugate algebraic
integers in a real quadratic field. Indeed this is clear in the rational case.
In the real quadratic case under the assumption that $|\alpha|\geq |\beta|$ we get
\begin{align*}
h\left(\frac{\alpha}{\beta}\right)= & \ \frac 12\left(\log |a_0|+\ls\left| \frac{\alpha}{\beta}\right|+
\ls\left| \frac{\beta}{\alpha}\right|\right)\\
\leq & \ \frac 12 \left(\log|\alpha|+\log|\beta|+\log|\alpha|-\log|\beta|\right)\\
= & \ \log|\alpha|.
\end{align*}
Note that the leading coefficient $a_0$ of the minimal polynomial of $\alpha/\beta$ satisfises $a_0|\alpha \beta=B$.
Therefore we obtain
\begin{equation}\label{eq:bound-x1}
\begin{split}
h\left(\frac{b(\be^{n-m}+1)}{a(\al^{n-m}+1)}\right)& \leq
\max\{\log|a|+\log|\alpha^{n-m}+1|,\log|b|+\log|\beta^{n-m}+1|\}\\
 & \leq\max\{\log|a|+(n-m)\log|\alpha|+\log 2,\\
 & \qquad \qquad  \log|b|+(n-m)\log|\beta|+\log 2|\}\\
 & \leq (n-m)\max\{\log|2a\alpha|,\log|2b\beta|\},
\end{split}
\end{equation}
hence
$$
h'\left(\frac{b(\be^{n-m}+1)}{a(\al^{n-m}+1)}\right)\leq (n-m)\max\{\log|2a\alpha|,\log|2b\beta|,\log
p\}:=c_{10}(n-m).
$$
Furthermore note that $h'(\alpha/\beta)\leq \max\{\log|\alpha|,\log p\}$.

We have to distinguish now between two cases, namely whether $\frac{b(\be^{n-m}+1)}{a(\al^{n-m}+1)}$ and
$\alpha/\beta$ are multiplicatively independent or not. Let us deal with the dependent case first.
In this case there exist co-prime integers $r$ and $s$ such that
 $$\frac{a(\alpha^{n-m}+1)}{b(\beta^{n-m}+1)}=\left(\frac{\alpha}{\beta}\right)^{r/s}.$$
 Let us find upper bounds for $|r|$ and $|s|$.

 First, we note that $\delta:=(\alpha/\beta)^{1/s}$ has to be still of degree two and is not a root of unity.
 Indeed $(\alpha/\beta)^{r/s}\in K$ therefore $(\alpha/\beta)^{r}\in K^s$. Since by assumption $r$ and $s$ are coprime
 we obtain that $(\alpha/\beta)\in K^s$ hence $\delta:=(\alpha/\beta)^{1/s}\in K$.
Hence
 $$0.24<\frac{h(\alpha/\beta)}{|s|}\leq \frac 1{|s|} \log |\alpha|$$
 and $|s|<4.2 \log|\alpha|$ by Lemma \ref{lem:height}. On the other hand we deduce from the upper bound
\eqref{eq:bound-x1} that
 \begin{multline*}
  4.2 (n-m)\max\{\log|2a\alpha|,\log|2b\beta|\}\log|\alpha|\geq |s|
h\left(\frac{b(\be^{n-m}+1)}{a(\al^{n-m}+1)}\right)\\
  =|r|\cdot h\left(\frac\alpha\beta\right) \geq |r| 0.24
 \end{multline*}
i.e.
$$|r|<17.5 (n-m)\log|\alpha|\max\{\log|2a\alpha|,\log|2b\beta|\}.$$
In particular, we have
$$-ms-r \leq m|s|+|r| \leq 17.5 n\log|\alpha|\left(\max\{\log|2a\alpha|,\log|2b\beta|\}+0.24\right)$$

Therefore rewriting \eqref{eq:p-adic-form} we obtain
\begin{equation*}
\begin{split}
\nu_p\left(\delta^{-ms-r}-1\right)= & \nu_p\left(-w(\al-\be)\al^{-m}a^{-1}(\al^{n-m}+1)^{-1}p_1^{z_1}\dots
p_s^{z_s}\right)\\
>& z-c_9(n-m)
\end{split}
\end{equation*}
Now, on supposing that $z \ge \frac{3}{2}+c_9(n-m)$ we may use property \eqref{eq:p-adic-log-2} of the
$p$-adic logarithm to obtain
$$\nu_p(\log_p \delta)+\nu_p(ms+r)> z-c_9(n-m).$$
If we assume that $n>17.5 \log|\alpha|\left(\max\{\log|2a\alpha|,\log|2b\beta|\}+0.24\right)$ we get by a very crude
estimate
\begin{equation*}
 \begin{split}
z<&\nu_p(\log_p \delta)+2\frac{\log n}{\log p}+c_9(n-m)\\
<&\left(\nu_p(\log_p \delta)+\frac{2}{\log p}+c_9\right)(n-m)\log n\\
:=& c_{11}(n-m)\log n
\end{split}
\end{equation*}
Note that
$$\nu_p(\log_p(\delta))=\nu_p\left(\frac 1s \log_p\left(\frac{\alpha}{\beta}\right)\right)\leq
\nu_p\left(\log_p\left(\frac{\alpha}{\beta}\right)\right).$$

Therefore we may assume that $\frac{b(\be^{n-m}+1)}{a(\al^{n-m}+1)}$ and $\alpha/\beta$ are multiplicatively
independent.
However, before applying Corollary \ref{cor:Bugeaud}, we have to ensure that
$$\nu_p\left(\frac{b(\be^{n-m}+1)}{a(\al^{n-m}+1)} \right)=\nu_p\left(\frac{\beta}{\alpha} \right)=0.$$
Let us assume for the moment that one of the $p$-adic valuations is not zero. But this would imply that the
$p$-adic valuation of the left hand side of \eqref{eq:p-adic-form} is zero, hence $z=0$.
Therefore we may apply Corollary \ref{cor:Bugeaud} and we obtain
\begin{equation}\label{eq:p-adic-upper}
 \nu_p(\Lambda)\leq C_1(p)\max\{\log|\alpha|,\log p\}c_{10}(n-m)\max\{\log n,10\log p,10\}^2.
\end{equation}
Comparing upper and lower bounds of $\nu_p(\Lambda)$ i.e. inequalities \eqref{eq:p-adic-lower} and
\eqref{eq:p-adic-upper} yields
$$C_1(p)\max\{\log|\alpha|,\log p\}c_{10}(n-m)(\log n)^2>z-c_9(n-m) $$
or by solving for $z$ followed by a crude estimate we obtain our upper bound for $z$:
$$z<\left(C_1(p)\max\{2\log|\alpha|,\log p\}c_{10}+c_9\right)(n-m)(\log n)^2:=c_8 (n-m)(\log n)^2.$$
\end{proof}

\section{The case $n=m$}\label{Sec:n=m}

Before we continue with the main line of the proof of Theorem \ref{thm1} let us consider the special case
$n=m$, i.e. the Diophantine equation
$$ 2u_n=w p_1^{z_1} \dots p_s^{z_s}.$$
Of course this equation has only a solution if $w$ is even or one of the primes, say $p_1=2$. Thus we are
reduced to consider Diophantine
equations of the type
\begin{equation}\label{eq:n=m_case}
 u_n=w'p_1^{z_1}\dots p_s^{z_s}
\end{equation}

Let us note that this type of equation has been considered by Peth\H{o} and de Weger~\cite{Pethoe:1986}.
They gave a practical method to solve this equation completely for a given binary, non-degenerate
sequence $u_n$ with discriminant $\Delta>0$, non-zero integer $w$ and primes $p_1,\dots,p_s$ by using $p$-adic
techniques.
In particular, they use linear forms in $p$-adic logarithms and a $p$-adic version of the Baker-Davenport
method \cite{Baker:1969}.
We also want to refer to the paper of Mignotte and Tzanakis~\cite{Mignotte:1991}, where also the case of
non-degenerate recurrences of arbitrary order is discussed.

\begin{proposition}[Peth\H{o} and de Weger \cite{Pethoe:1986}]\label{prop:n=m}
 Let $n,z_1,\dots,z_s$ be a solution to \eqref{eq:n=m_case}, then there exist explicitly computable constants
$c_{12,i}$ with $1\leq i \leq s$ and $c_{13}$
 such that $z_i<c_{12,i}$ for all $1\leq i \leq s$ and $n<c_{13}$.
\end{proposition}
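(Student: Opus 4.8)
The statement is due to Peth\H{o} and de Weger \cite{Pethoe:1986}, and the plan is to reprove it as a simpler variant of the argument behind Proposition \ref{prop:p-adic}. The overall strategy is the usual one: for each prime $p=p_i$ I would bound the exponent $z_i$ by $z_i<c'_{12,i}(\log n)^2$ with $c'_{12,i}$ explicit (valid once $n$ exceeds certain explicit thresholds; for the finitely many smaller $n$ there is nothing to prove), and then close the loop as follows. Applying Lemma \ref{lem:bounds} \textit{(iii)} with $m=n$ gives, for $n>c_3$, that $(c_4/2)|\alpha|^n<|u_n|=|w'|p_1^{z_1}\cdots p_s^{z_s}$, hence
\begin{equation*}
n<\frac{1}{\la}\sum_{i=1}^s z_i\log p_i+\frac{\log(2|w'|/c_4)}{\la}<\frac{\log(2|w'|/c_4)}{\la}+\left(\frac{1}{\la}\sum_{i=1}^s c'_{12,i}\log p_i\right)(\log n)^2,
\end{equation*}
so that Lemma \ref{lem:pdw} with $h=2$ produces an explicit bound $n<c_{13}$; substituting this back into the bounds for the $z_i$ yields $z_i<c_{12,i}$.

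The heart of the matter is therefore the bound $z_i<c'_{12,i}(\log n)^2$. Fixing $i$, writing $p=p_i$ and working in $L=\Q_p(\sqrt{\Delta})$, I would start from $z_i=\nu_p(u_n)-\nu_p(w')$ and $\nu_p(u_n)=\nu_p(a\alpha^n-b\beta^n)-\nu_p(\alpha-\beta)$, which follow from \eqref{eq:binet}. Since $p\nmid\gcd(A,B)$ while $\alpha+\beta=A$ and $\alpha\beta=-B$, at least one of $\nu_p(\alpha),\nu_p(\beta)$ vanishes, and — $|\alpha|>|\beta|$ playing no role here — I may assume $\nu_p(\alpha)=0$. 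Two easy cases can be dispatched first: if $\nu_p(\beta)>0$ then $\nu_p(b\beta^n)=\nu_p(b)+n\nu_p(\beta)$ exceeds $\nu_p(a)=\nu_p(a\alpha^n)$ for all $n$ beyond an explicit value, so $\nu_p(a\alpha^n-b\beta^n)=\nu_p(a)$ and $z_i$ is bounded; and if $\nu_p(\beta)=0$ but $\nu_p(b/a)\neq 0$ then, with $\delta=\beta/\alpha$, the ultrametric inequality gives $\nu_p(1-\tfrac ba\delta^n)=\min\{0,\nu_p(b/a)\}$, so again $z_i$ is bounded. This leaves the main case $\nu_p(\beta)=\nu_p(b/a)=0$; setting $\gamma=b/a$ and $\delta=\beta/\alpha$, both of $p$-adic valuation $0$, one has $z_i=\nu_p(\gamma\delta^n-1)+O(1)$ with explicit $O(1)$.

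Finally I would distinguish, exactly as in the proof of Proposition \ref{prop:p-adic}, according to whether $\gamma$ and $\delta$ are multiplicatively independent. If they are, Corollary \ref{cor:Bugeaud} applies with $\alpha_1=\gamma$, $\alpha_2=\delta$, $b_1=1$, $b_2=n$ and $B'=\max\{n,p^{10},e^{10}\}$ (the hypotheses $\nu_p(\alpha_1)=\nu_p(\alpha_2)=0$ being in force), and yields $\nu_p(\gamma\delta^n-1)<C_1(p)h'(\gamma)h'(\delta)(\log B')^2\ll(\log n)^2$ for $n>\max\{p^{10},e^{10}\}$. If instead $\gamma^t=\pm\delta^r$ for some coprime integers $r,t$ with $t\ge 1$, then — using that $\delta$ is not a root of unity together with Lemma \ref{lem:height} — one checks that $|r|$ and $|t|$ are explicitly bounded and that $\gamma\delta^n=\pm\vartheta^{\,r+nt}$ for a suitable $\vartheta\in K$ with $\nu_p(\vartheta)=0$ that is not a root of unity, and then property \eqref{eq:p-adic-log-2} of the $p$-adic logarithm (as used for the dependent case in Proposition \ref{prop:p-adic}) bounds $\nu_p(\gamma\delta^n-1)$ by $\nu_p(\log_p\vartheta)+\nu_p(r+nt)+O(1)\ll\log n$, once this valuation is at least $\tfrac32$. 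In every case $z_i<c'_{12,i}(\log n)^2$ with $c'_{12,i}$ explicit. As in Proposition \ref{prop:p-adic}, I expect the only genuinely delicate points to be the multiplicatively dependent case (including the degenerate possibility $\gamma=\pm 1$) and the careful bookkeeping of the thresholds on $n$; everything else is routine valuation arithmetic together with Lemma \ref{lem:pdw}.
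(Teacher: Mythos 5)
Your proof is correct and follows essentially the same route as the paper's sketch: reduce to bounding $\nu_p\bigl((b/a)(\beta/\alpha)^n-1\bigr)$, split according to whether $b/a$ and $\beta/\alpha$ are multiplicatively independent, apply Bugeaud--Laurent (Corollary~\ref{cor:Bugeaud}) in the independent case and the $p$-adic logarithm property \eqref{eq:p-adic-log-2} in the dependent case, then close with Lemma~\ref{lem:bounds}\textit{(iv)} and Lemma~\ref{lem:pdw}. You are in fact slightly more careful than the paper, which simply asserts one may take $\nu_p(b/a)=\nu_p(\beta/\alpha)=0$ ``without loss of generality''; you justify this by dispatching the cases $\nu_p(\beta)>0$ and $\nu_p(b/a)\neq 0$ via the ultrametric inequality, and you also flag the degenerate possibility $\gamma=\pm1$ in the dependent case.
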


The constant $c_{13}$ has been stated explicitly by Peth\H{o} and de Weger \cite[Theorem~4.1]{Pethoe:1986}.
However they used a version due to Schinzel \cite{Schinzel:1967} for lower bounds
in linear forms in $p$-adic logarithms that has been succeeded by newer developments e.g. the results due to
Yu \cite{Yu:1999,Yu:2013} and in particular by Bugeaud and Laurent \cite{Bugeaud:1996}.
For the sake of completeness and with respect to the newer developments we give a proof of Proposition
\ref{prop:n=m}.

\begin{proof}
 Since the proof due to Peth\H{o} and de Weger \cite{Pethoe:1986} and the similarity to the proof of
Proposition \ref{prop:p-adic} we only sketch the proof. As in the proof of Proposition \ref{prop:p-adic}
 we fix an index $i$ and drop it for the rest of the proof. We may assume without loss of generality that
$\nu_p(\alpha)=\nu_p(b/a)=\nu_p(\beta/\alpha)=0$.
 We rewrite equation \eqref{eq:n=m_case} and obtain
 \begin{equation}\label{eq:n=m-p-adic-form}
 \frac{b}{a} \cdot \left(\frac{\beta}{\alpha} \right)^n -1=w'(\alpha-\beta)a^{-1} \alpha^{-n} p_1^{z_1}\cdots
p_s^{z_s}.
 \end{equation}

 Let us assume for the moment that  $\frac{b}{a}$ and $\frac{\beta}{\alpha}$ are multiplicatively independent
and that $n>\max\{p^{10},e^{10}\}$.
 Let us put $\Lambda=\frac{b}{a} \cdot \left(\frac{\beta}{\alpha} \right)^n -1$. Estimating the $p$-adic
valuation on the right hand side of \eqref{eq:n=m-p-adic-form} yields
 $$\nu_p(\Lambda)\geq z-\frac{\ls |a|}{\log p}$$
 and an application of Corollary \ref{cor:Bugeaud} yields
 $$\nu_p(\Lambda)\leq C_1(p)h'\left(\frac ba \right)h'\left(\frac{\beta}{\alpha}\right)\max\{\log n,10\log
p,10\}^2,$$
 with
 $$h'\left( \frac ba \right)\leq \max\{\log |a|,\log |b|,\log{p}\}$$
 and
 $$h'\left( \frac {\beta}{\alpha} \right)\leq \max\{\log |\alpha|,\log |\beta|,\log{p}\}.$$
 Thus we get
 \begin{equation*}
 \begin{split}
 z<& \left(C_1(p)h'\left(\frac ba\right)h'\left(\frac{\beta}{\alpha}\right)+\frac{\ls|a|}{\log
p}\right)\max\{\log n,10\log p,10\}^2\\
 =&c_{14} \max\{\log n,10\log p,10\}^2.
 \end{split}
 \end{equation*}
 If we assume that $n>c_3$ we may apply Lemma \ref{lem:bounds} \textit{(iv)} and obtain either an absolute
bound for $n$ or an inequality
 of the form $n<c_{15} (\log n)^2$ and Lemma \ref{lem:pdw} yields an absolute upper bound for $n$. In any case
we obtain the Proposition.

 We are left with the case that  $\frac{a}{b}$ and $\frac{\alpha}{\beta}$ are multiplicatively dependent, i.e.
there are co-prime integers $s$ and $r$ such that
 $\left(\frac ab \right)^s=\left(\frac \alpha\beta \right)^r$ and let $\delta=\left(\frac \alpha\beta
\right)^{1/s}$. As in the proof of
 Proposition \ref{prop:p-adic} we deduce $|s|<4.2 \log|\alpha|$ and by a similar computation we obtain
$|r|<17.5 \log|\alpha|\max\{\log |a|,\log |b|, 1\}$.
 Hence we obtain from \eqref{eq:n=m-p-adic-form} the inequality
 $$\nu_p(\delta^{-ns-r}-1)\geq z-\frac{\ls |a|}{\log p}.$$
 Now on supposing $z \ge \frac{3}{2}+\frac{\ls |a|}{\log p}$ we may use the properties of the $p$-adic
logarithms (e.g. see formula \eqref{eq:p-adic-log-2}) and we obtain
 $$z<\nu_p(\log_p\delta)+\frac{\log(4.2 n \log |\alpha|+17.5\log|\alpha|\max\{\log |a|,\log
|b|, 1\})+\ls|a|}{\log p}.$$
 Assuming that
 \begin{equation*}
 n>17.5\log|\alpha|\max\{\log |a|,\log |b|, 1\}
 \end{equation*}
 we obtain $z<\left(\nu_p(\log_p\delta)+\frac{2}{\log{p}}+\frac{\ls|a|}{\log{p}} \right) \log n$, and by Lemma \ref{lem:bounds} \textit{(iv)} and the assumption
$n>c_3$ we obtain
 $$n<\frac{\sum_{i=1}^s {\left(\nu_{p_i}(\log_{p_i}\delta)\log{p_i}+(2+\ls|a|) \right) }   }{\log |\alpha|} \log n+c_5=c_{16}\log n+c_5$$
 and once again applying Lemma \ref{lem:pdw} yields an upper bound $c_{13}$ for $n$ (for an explicit
determination see Section \ref{Sec:Appendix}).
\end{proof}

 \section{The case $n>m$}\label{Sec:n>m}

 In order to solve the case $n>m$, we have to use lower bounds for linear forms in complex logarithms. The
currently best result suitable for our purpose is
 the following theorem due to Matveev \cite{Matveev:2000}.

 \begin{theorem}[Matveev \cite{Matveev:2000}] \label{Th:Matveev}
 Denote by $\eta_1,\ldots,\eta_n$ algebraic numbers, not $0$ or $1$, by $\log\eta_1,\ldots,\log\eta_n$
determinations of their logarithms,
 by $D$ the degree over $\Q$ of the number field $L = \Q(\eta_1,\ldots,\eta_n)$, and by $b_1,\ldots,b_n$
rational integers.
 Define $B'=\max\{|b_1|,\ldots,|b_n|\}$, and $A_i= \max\{D h(\eta_i),|\log\eta_i|, 0.16\}$ ($1\le i\le n$),
where $h(\eta)$ denotes
 the absolute logarithmic Weil height of $\eta$. Assume that the number
$\Lambda=b_1\log\eta_1+\cdots+b_n\log\eta_n$ does not vanish. Then
\[\log |\Lambda|\geq -C(n,\varkappa) D^2 A_1\cdots A_n \log (eD) \log (eB')\},\]
where $\varkappa=1$ if $\mathbb K \subset \R$ and $\varkappa =2$ otherwise and
\[C(n,\varkappa)=\min \left\{ \frac 1{\varkappa} \left( \frac 12 en \right) ^{\varkappa} 30^{n+3} n^{3.5},
2^{6n +20} \right\}. \]
\end{theorem}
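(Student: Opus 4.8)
The plan is as follows. Theorem~\ref{Th:Matveev} is a classical explicit lower bound for linear forms in logarithms of Baker type, so I would not reprove it from scratch — this is precisely why the paper simply quotes \cite{Matveev:2000} — but I can indicate the shape of such a proof. Baker's method proceeds in five stages. First, one argues by contradiction: assume $|\Lambda|$ is smaller than the claimed bound and aim to derive a contradiction. Second, the auxiliary construction: using a Thue--Siegel / Dirichlet box argument (Siegel's lemma) one produces a nonzero polynomial $P(X_0,X_1,\ldots,X_{n-1})$ with rational integer coefficients of controlled degree and height such that the entire function $\Phi(z)=P\bigl(z,\eta_1^{z},\ldots,\eta_{n-1}^{z}\bigr)$ vanishes to prescribed order at a suitable set of rational points built out of the $b_i$; the extra Fel'dman variable $X_0$ absorbs binomial-type factors and is what ultimately yields the factor $\log(eD)$ rather than a power of $D$.

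Third, the analytic step: one combines the assumed smallness of $\Lambda$ with the Schwarz lemma and the maximum modulus principle to extrapolate — since $\Phi$ does not grow too fast and vanishes to high order at many points, $\Phi$ and its low-order derivatives stay small on a larger disc, hence small at further rational points. Fourth, the arithmetic step: the normalized derivative values $\Phi^{(j)}(\xi)/j!$ are algebraic numbers whose degree and height one bounds explicitly in terms of the $A_i$ and $D$, so a Liouville inequality forces every such value that is analytically small to be exactly zero. Fifth, the zero estimate: a nonzero polynomial of the prescribed degree cannot vanish to such high order at so many points, which is the desired contradiction; in Matveev's treatment this last step is handled by an intricate induction on $n$ that, notably, does \emph{not} require the $\eta_i$ to be multiplicatively independent and keeps the dependence on $n$ down to roughly $30^{n+3}n^{3.5}$.

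The main obstacle is not the overall architecture, which is standard, but the quantitative bookkeeping needed to reach the stated explicit constant $C(n,\varkappa)$ with the dichotomy $\varkappa=1$ for $K\subset\R$ versus $\varkappa=2$ otherwise: one must choose the parameters — the degree in $X_0$, the degrees in $X_1,\ldots,X_{n-1}$, the number and spacing of the interpolation points, and the number of extrapolation steps — near-optimally, and then track every constant through all the analytic and arithmetic inequalities, including the delicate point that the $A_i$ involve $|\log\eta_i|$ for an \emph{arbitrary} determination of the logarithm rather than a principal branch. For the purposes of the present paper only the statement is needed, so I would cite \cite{Matveev:2000} as it stands and proceed to Section~\ref{Sec:n>m}, where $\Lambda$ will be the linear form in two or three logarithms arising from rewriting \eqref{eq:main}; the bound from Theorem~\ref{Th:Matveev} will then be combined with Proposition~\ref{prop:p-adic} and Lemma~\ref{lem:pdw} to extract an absolute upper bound for $n$.
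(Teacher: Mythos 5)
You correctly recognize that Theorem~\ref{Th:Matveev} is an external result quoted verbatim from Matveev's paper, and the paper does exactly as you propose: it cites \cite{Matveev:2000} without proof and immediately specializes it (in inequality \eqref{eq:Matveev_bound}) to the real quadratic setting at hand. Your sketch of the Baker-method architecture is accurate but not required; the approach — cite and proceed — matches the paper's.
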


Since the field $K=\Q(\sqrt{\Delta})$ of our interest is real and of degree at most $2$ and all the $\alpha_i$
will be positive we obtain
\begin{equation}\label{eq:Matveev_bound}
 \log |\Lambda|\geq -C_2(n) h(\eta_1)\cdots h(\eta_n) \log B',
\end{equation}
where
$$
C_2(n)= 2.31 \cdot 60^{n+3} n^{4.5}
$$
provided that $B'\geq 3$. Note that since Lemma \ref{lem:height} we can choose $A_i=2h(\eta_i)$.

The main purpose of this section is to find absolute upper bounds for $n$ and the exponents $z_i$ with $1\leq
i \leq s$.
For technical reasons we will assume that $n-m$ is not to small. In particular, we may assume that
$$n-m>\frac{\log\left(2\left(1+\frac{2|b|}{|a|}\right)\right)}{\log\left(\min\left\{\frac{|\alpha|}{|\beta|},
|\alpha|\right\}\right)}=c_{17}.$$
Indeed, due to Proposition \ref{prop:p-adic} the case that $n-m\leq c_{17}$  immediately implies
$n<c_7(n-m)(\log n)^2<c_7c_{17} (\log n)^2$. Thus by Lemma \ref{lem:pdw} we get that
$n\leq 4 c_7c_{17} \log(4c_7c_{17})^2$.

Hence we assume for the rest of this section that $n-m > c_{17}$.
Let us rewrite equation \eqref{eq:main} as
\begin{equation} \label{eq:linform}
|p_1^{z_1} \cdots p_s^{z_s}w(\al-\be){a}^{-1}\al^{-n}-1|=\frac{|-b\be^n+a\al^m-b\be^m|}{|a\al^n|}
\end{equation}
Let us estimate the right hand side of \eqref{eq:linform}. We obtain
\begin{align*}
 \frac{|-b\be^n+a\al^m-b\be^m|}{|a\al^n|}\leq & \left| \frac{2b \max\{\beta^n,\beta^m\}}{a\alpha^n}
\right|+|\alpha|^{m-n}\\
 &\leq \frac{2|b|}{|a|}\max\left\{\frac{|\beta|^n}{|\alpha|^n},\frac{|\beta|^m}{|\alpha|^n}\right\}
+|\alpha|^{m-n}\\
 &\leq \frac{2|b|}{|a|}\max\left\{\left(\frac{|\alpha|}{|\beta|}\right)^{m-n},|\alpha|^{m-n}\right\}
+|\alpha|^{m-n}\\
 &\leq \left(1+\frac{2|b|}{|a|}\right)\max\left\{\frac{|\be|}{|\al|},\frac{1}{|\al|} \right\}^{n-m}.
\end{align*}
Note that our assumption $n-m\geq c_{17}$ was chosen such that the right hand side of equation \eqref{eq:linform} is
smaller than $1/2$.
Since $|\log|x+1||\leq 2|x|$ provided $0\leq |x|\leq 1/2$ we obtain by taking logarithms on both sides of equation \eqref{eq:linform}
the inequality
\begin{equation}\label{eq:linform-ieq}
|\Lambda|:=|z_1\log p_1+\dots+z_s\log p_s+\log
|\gamma|-n\log|\alpha||<\frac{2\left(1+\frac{2|b|}{|a|}\right)}{\min\left\{\frac{|\alpha|}{|\beta|},
|\alpha|\right\}^{n-m}},
\end{equation}
where $\gamma=\frac{w\sd}{a}$. In order to apply Matveev's Theorem we have to ensure that $\Lambda$ does not
vanish. However this is established by the following Lemma:

\begin{lemma}\label{lem:vanishing}
 If $\Lambda=0$ then there exists a constant $c_{18}$ such that $n<c_{18}$.
\end{lemma}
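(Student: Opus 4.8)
The plan is to unwind what $\Lambda = 0$ means algebraically and show it forces one of the exceptional configurations already excluded in Theorem~\ref{thm1}, unless $n$ is absolutely bounded. Recall $\Lambda = z_1\log p_1+\dots+z_s\log p_s+\log|\gamma|-n\log|\alpha|$ with $\gamma = w\sd/a$. If $\Lambda = 0$ then, exponentiating, $p_1^{z_1}\cdots p_s^{z_s}\,w\sd/a = |\alpha|^n$, i.e. $p_1^{z_1}\cdots p_s^{z_s}\,w(\alpha-\beta) = \pm a\alpha^n$ (the sign and a possible conjugation being harmless since $\alpha$ is real or the positive root). Feeding this back into the rewritten equation \eqref{eq:linform}, namely $p_1^{z_1}\cdots p_s^{z_s}w(\alpha-\beta)a^{-1}\alpha^{-n} - 1 = (-b\beta^n+a\alpha^m-b\beta^m)/(a\alpha^n)$, the left side vanishes (up to the sign ambiguity), so we get the exact identity
$$
\pm a\alpha^n = -b\beta^n + a\alpha^m - b\beta^m,
$$
equivalently $a\alpha^m(\alpha^{n-m}\mp 1) = b\beta^n + b\beta^m = b\beta^m(\beta^{n-m}+1)$.

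The first step is to dispose of the generic case. If $n-m > c_{17}$ (which we are assuming in this section), then the right‑hand side $|b\beta^m(\beta^{n-m}+1)|$ is, after dividing by $|a\alpha^m|$, bounded by the quantity $(1+2|b|/|a|)\min\{|\beta|/|\alpha|,1/|\alpha|\}^{n-m}$ which by the choice of $c_{17}$ is $<1/2$; on the other hand $|\alpha^{n-m}\mp 1| \ge |\alpha|^{n-m} - 1 \ge \varphi^{n-m}-1$, which is $\ge 1/2$ already for $n-m\ge 1$. So for $n-m$ moderately large the identity is impossible unless $\beta = 0$, contradicting non‑degeneracy — hence $\Lambda=0$ combined with $n-m>c_{17}$ is already impossible, and a fortiori $n$ is bounded. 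Wait: this is too quick, because $\pm a\alpha^n$ could be the \emph{conjugate} expression; I would instead argue directly at the level of the identity $a\alpha^m(\alpha^{n-m}\mp1) = b\beta^m(\beta^{n-m}+1)$ and its conjugate $a'\beta^m(\beta^{n-m}\mp1) = b'\alpha^m(\alpha^{n-m}+1)$ (when $K$ is a genuine quadratic field), multiply them, and extract a polynomial relation in $\alpha$ that, since $|\alpha|>|\beta|$, forces $n-m$ to be bounded by an effectively computable constant depending only on the sequence. The rational case $K=\Q$ is handled the same way but more simply since then $\alpha,\beta,a,b\in\Z$ and $|\alpha|^n = p_1^{z_1}\cdots p_s^{z_s}|w(\alpha-\beta)/a|$ directly says $n\log|\alpha| = \log|\gamma| + \sum z_i\log p_i$, which pins $n$ down once we bound the $z_i$.

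Now the main work is the bounded range $n-m \le$ (that constant). Write $x = n-m$; there are only finitely many admissible values of $x$, and for each the identity reads $a\alpha^m(\alpha^x\mp1) + b\beta^m(\beta^x\mp\text{(sign)}) = 0$ — equivalently $(\beta/\alpha)^m = -a(\alpha^x\mp1)/(b(\beta^x\mp1))$, a fixed algebraic number. Since $\beta/\alpha$ is not a root of unity, its logarithmic height is positive (Lemma~\ref{lem:height}), so $m\,h(\beta/\alpha) = h$ of that fixed number is bounded, giving an absolute bound on $m$, hence on $n$. The surviving borderline subcases are exactly where the right‑hand side is itself of the form making $m$ unconstrained: when $|\beta|=1$, i.e. $\beta=\pm1$ ($\alpha$ then rational), the equation $a\alpha^m(\alpha^x\mp1) = b(\beta^x\mp1)$ with $\beta=\pm1$ degenerates — if $\beta=1$ one gets $a\alpha^m(\alpha^x - 1)=0$ forcing $x=0$, handled by Proposition~\ref{prop:n=m}; if $\beta=-1$ and $x$ odd the right side is $\mp 2b$ and we land on $a\alpha^m(\alpha^x+1) = \pm 2b$ or, tracking the original $\Lambda=0$ relation $\gamma\cdot\prod p_i^{z_i} = \alpha^n$ together with $a\alpha^m(\alpha^x+1) = w\prod p_i^{z_i}$, precisely on relation \eqref{eq:exceptional}, which is excluded by hypothesis; and the case $\beta = -1$ with $x$ even, or the relation $m = \log(\beta^m 2b/a)/\log\alpha$, is the first excluded case.

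The hard part is the careful bookkeeping in this last paragraph: making sure every way the height argument can fail to bound $m$ is matched with one of the two excluded exceptional cases (and with Proposition~\ref{prop:n=m} when $x=0$), and doing so uniformly over the real‑quadratic versus rational dichotomy and over the sign ambiguities introduced by passing between $|\alpha|^n$ and $\pm\alpha^n$. Everything else is routine: the constant $c_{18}$ is then the maximum of the bound on $m$ coming from $m\,h(\beta/\alpha) \le$ (bounded height), the constant from Proposition~\ref{prop:n=m}, and the bound on $n-m$ from the multiplied‑conjugates step, all of which are effectively computable from $\{u_n\}$, $w$, and the $p_i$.
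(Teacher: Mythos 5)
Your overall plan (extract an exact algebraic identity from $\Lambda=0$, match the degeneracies to the excluded cases of Theorem~\ref{thm1}, bound $n-m$, then bound $m$ for each fixed $n-m$) is reasonable, but there are concrete gaps and the argument as written does not close.

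First, the identity you extract is wrong. When $\Lambda=0$ and $n-m>c_{17}$, the absolute value on the left of \eqref{eq:linform} forces $p_1^{z_1}\cdots p_s^{z_s}w(\alpha-\beta)a^{-1}\alpha^{-n}=+1$ (the $-1$ case would make the left side equal $2$, contradicting the bound $<1/2$ on the right side), and then the right side of \eqref{eq:linform} must vanish, giving exactly $b\beta^n-a\alpha^m+b\beta^m=0$, i.e.\ $a\alpha^m=b\beta^m(\beta^{n-m}+1)$. There is no $(\alpha^{n-m}\mp 1)$ factor; your rewriting $\pm a\alpha^n=-b\beta^n+a\alpha^m-b\beta^m$ miscomputes both possible right-hand sides, and the ensuing case analysis (e.g.\ ``$\beta=1$ forces $x=0$'') is built on this wrong identity.

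Second, and more substantially, your mechanism for bounding $n-m$ does not work in the hard case. You yourself flag the first attempt as ``too quick,'' and it is: if $\Delta$ is a perfect square and $\alpha,\beta\in\Z$ with $|\beta|\geq 2$, dividing $a\alpha^m=b\beta^m(\beta^{n-m}+1)$ by $a\alpha^n$ gives $\alpha^{-(n-m)}=b(\beta^m+\beta^n)/(a\alpha^n)$, and the right side decays like $(|\beta|/|\alpha|)^n$ which need not be smaller than $|\alpha|^{-(n-m)}$; there is no contradiction. Your fallback of ``multiplying with the conjugate'' has no content precisely in this rational case, since $\alpha,\beta,a,b$ lie in $\Q$ and there is no nontrivial Galois conjugate to multiply by. This is the case where the paper genuinely needs a fresh archimedean input: it reads off from the vanishing relation the new three-term linear form $\log|a/b|+m\log|\alpha|-n\log|\beta|$ with upper bound $2/|\beta|^{n-m}$, shows this form is nonzero, applies Matveev's theorem to get $n-m<c_{23}\log n$, and only then combines with Proposition~\ref{prop:p-adic} and Lemma~\ref{lem:pdw}. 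Your height argument ($m\,h(\alpha/\beta)\leq h(\text{fixed number depending on }n-m)$) gives $m\lesssim 1+(n-m)$, not an absolute bound, so without an independent bound on $n-m$ it never closes. In the genuinely quadratic case ($\Delta$ not a square) the paper's conjugation trick is shorter than anything you propose: conjugating $b\beta^n-a\alpha^m+b\beta^m=0$ and combining with the original yields $-b\beta^n=a\alpha^n$, which immediately bounds $n$ by $\log|b/a|/\log|\alpha/\beta|$.
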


In order to complete the proof of Theorem \ref{thm1} we postpone the proof of Lemma~\ref{lem:vanishing} to the
next section.
Now by Lemma \ref{lem:vanishing} we may assume that $\Lambda$ does not vanish and we may apply Matveev's
Theorem.
Before we apply Matveev's Theorem let us note that
$$B'=\max\{z_1,\dots,z_s,1,n\}\leq n \frac{2\log |\alpha|}{\log 2}< n^2,$$
provided that $n>\max\{c_2,\frac{2\log |\alpha|}{\log 2}\}$. Thus by comparing the upper bound from
inequality~\eqref{eq:linform-ieq}
with the lower bound of $|\Lambda|$ from Matveev's bound \eqref{eq:Matveev_bound} we obtain the inequality
\begin{multline*}
C_2(s+2)\log p_1 \dots \log p_s \log|\alpha| h(\gamma) 2 \log n>\\
(n-m)\log\left(\max\left\{\frac{|\alpha|}{|\beta|},|\alpha|
\right\}\right)-\log\left(1+\frac{2|b|}{|a|}\right),
\end{multline*}
i.e. there exists a constant $c_{19}$ (see Section \ref{Sec:Appendix} for an explicit form) such that
$n-m<c_{19}\log n$. In combination with
Proposition \ref{prop:p-adic} we obtain $n< c_7 c_{19} (\log n)^3$, provided that $n>c_6$.
Once again using Lemma \ref{lem:pdw} yields an upper bound $c_{20}$ for $n$ and Lemma \ref{lem:bounds} yields
an upper bound $z_i<c_{21,i}$
for each $1\leq i \leq s$.

 With the notations from above in force we have proved so far the following proposition, which clearly implies
Theorem \ref{thm1}:

 \begin{proposition}\label{prop:explicit}
 Under the assumptions of Theorem \ref{thm1} we have
 $$\max\{n,m\}< c_{20}\;\; \text{and}\;\; z_i<c_{21,i}$$
 for $i=1,\dots, s$.
 \end{proposition}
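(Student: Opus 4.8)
The plan is to assemble the proposition directly from the estimates established in Sections~\ref{Sec:notation}--\ref{Sec:n>m}, splitting the argument according to whether $n=m$ or $n>m$ and repeatedly invoking Lemma~\ref{lem:pdw} to convert inequalities of the shape $n<c(\log n)^h$ into absolute bounds. Whenever $n$ fails to exceed one of the thresholds appearing in the auxiliary results (the constants $c_2,c_3,c_6$, or the trivial bound $n\le 3$), $n$ is already bounded by an explicit constant and Lemma~\ref{lem:bounds}~\textit{(ii)} then bounds each $z_i$, so it suffices to treat the generic range and to absorb the excluded small cases into $c_{20}$ and $c_{21,i}$ by taking maxima. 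If $n=m$, then \eqref{eq:main} reads $2u_n=w p_1^{z_1}\cdots p_s^{z_s}$, which forces $2\mid w$ or $p_j=2$ for some $j$; in either case the equation reduces to the form \eqref{eq:n=m_case}, and Proposition~\ref{prop:n=m} supplies explicit bounds $z_i<c_{12,i}$ and $n<c_{13}$, which are folded into $c_{20},c_{21,i}$.

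For $n>m$ there are two subcases. If $n-m\le c_{17}$, then Proposition~\ref{prop:p-adic} gives $n<c_7(n-m)(\log n)^2\le c_7c_{17}(\log n)^2$, and Lemma~\ref{lem:pdw} bounds $n$. If $n-m>c_{17}$, I would rewrite \eqref{eq:main} as \eqref{eq:linform}, estimate its right-hand side (using $|\alpha|\ge|\beta|$ and the defining property of $c_{17}$) to be smaller than $1/2$, and take logarithms to obtain inequality \eqref{eq:linform-ieq} for the linear form $\Lambda=z_1\log p_1+\dots+z_s\log p_s+\log|\gamma|-n\log|\alpha|$ with $\gamma=w\sqrt{\Delta}/a$. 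Provided $\Lambda\ne 0$ — which holds whenever $n\ge c_{18}$, by Lemma~\ref{lem:vanishing} — Matveev's Theorem~\ref{Th:Matveev}, in the specialized form \eqref{eq:Matveev_bound} with $B'=\max\{z_1,\dots,z_s,1,n\}<n^2$, yields a lower bound for $|\Lambda|$. Comparing it with the upper bound \eqref{eq:linform-ieq} gives $n-m<c_{19}\log n$; combining this with the $p$-adic bound $n<c_7(n-m)(\log n)^2$ of Proposition~\ref{prop:p-adic} yields $n<c_7c_{19}(\log n)^3$, so a final application of Lemma~\ref{lem:pdw} gives $n<c_{20}$. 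Since $m\le n$ this also bounds $m$, and Lemma~\ref{lem:bounds}~\textit{(ii)} then gives $z_i<c_{21,i}$ for each $i$. (The residual cases $m\le 3$ are easier, since then $n-m<c_{19}\log n$ already bounds $n$ without recourse to Proposition~\ref{prop:p-adic}.)

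The main obstacle is not the chain of comparisons itself but the two places where the hypotheses of Theorem~\ref{thm1} are genuinely used: ruling out the vanishing of $\Lambda$ via Lemma~\ref{lem:vanishing}, which corresponds exactly to the first exceptional configuration, and — already inside Proposition~\ref{prop:p-adic} — handling the subcase in which $b(\beta^{n-m}+1)/(a(\alpha^{n-m}+1))$ and $\alpha/\beta$ are multiplicatively dependent, where the second exceptional configuration and equation~\eqref{eq:exceptional} intervene. Everything else is bookkeeping: each of $c_{19},c_{20},c_{21,i}$ must be written out explicitly in terms of $\{u_n\}$, $w$, $s$, and $p_1,\dots,p_s$, a task deferred to Section~\ref{Sec:Appendix}.
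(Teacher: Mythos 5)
Your proposal follows essentially the same route as the paper: for $n>m$ with $n-m>c_{17}$, combine the Matveev-derived bound $n-m<c_{19}\log n$ from inequality \eqref{eq:linform-ieq} (after Lemma \ref{lem:vanishing} rules out $\Lambda=0$) with the $p$-adic gap principle of Proposition \ref{prop:p-adic}, feed $n<c_7c_{19}(\log n)^3$ into Lemma \ref{lem:pdw}, and bound each $z_i$ via Lemma \ref{lem:bounds}~\textit{(ii)}; the residual cases $n=m$ and $n-m\le c_{17}$ are absorbed exactly as you indicate. The only imprecision is in your closing aside: the second exceptional hypothesis of Theorem \ref{thm1} does not enter through the multiplicatively dependent subcase of Proposition \ref{prop:p-adic} (which is handled unconditionally by bounding $|r|$ and $|s|$ via Lemma \ref{lem:height}), but rather through the degenerate configuration $\beta=-1$ with $n-m$ odd, where $\beta^{n-m}+1=0$ and the left-hand side of \eqref{eq:p-adic-form} collapses; this misattribution does not affect the validity of the chain of estimates you describe.
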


 \section{Proof of Lemma \ref{lem:vanishing}}

First, let us note that $\Lambda=0$ implies that
\begin{equation} \label{eq:Lambda-zero}
b\beta^n-a\alpha^m+b\beta^m=0.
\end{equation}
In the case that the $\Delta>0$ is not a perfect square $\alpha$ and $\beta$ respectively $a$ and $b$ are
algebraic conjugate and we obtain by conjugating the left side of equation \eqref{eq:Lambda-zero}
$$
-a\al^n+b\be^m-a\al^m=0
$$
which yields together with the original equation \eqref{eq:Lambda-zero} the relation $-b\be^n=a\al^n$. Therefore $n<\frac{\log |b/a|}{\log
|\alpha/\beta|}=c_{22}$.

Therefore we may assume that $\Delta$ is a perfect square and that $\alpha,\beta,a,b$ are all rational integers. Assume
for the moment that $|\beta|=1$.
Then \eqref{eq:Lambda-zero} turns into $a \alpha^m =0$ or $a \alpha^m=2b$. The first case would imply that $u_n$ is degenerate
and the second case has been excluded.
Thus we may assume that $|\beta|\geq 2$.

Under these assumptions we obtain from \eqref{eq:Lambda-zero} by dividing through $b\beta^n$ the inequality
$$\left|\frac{|a|}{|b|}\frac{|\alpha|^m}{|\beta|^{n}}-1\right|\leq |\beta|^{m-n}$$
and by taking logarithms we get
\begin{equation} \label{eq:linform2}
\Lambda:=\log\left|\frac ab\right|+m\log |\alpha|-n\log|\beta|<\frac{2}{|\beta|^{n-m}}
\end{equation}
Let us assume for the moment that $\Lambda=0$. Since $a,b,\al,\be$ are integers $\Lambda=0$ implies that
$a\al^m=\pm b\be^n$ and in combination with equation \eqref{eq:Lambda-zero} this leads either to $b\be^m=0$ or $\be^{n-m}=-\frac{1}{2}$.
But, both cases are contradictions in view of $|\beta| \geq 2$ and $n>m$.

Hence we may suppose that $\Lambda \neq 0$ and therefore we may apply Matveev's Theorem \ref{Th:Matveev} to
the left side of inequality  \eqref{eq:linform2} and obtain
$$\log|\Lambda|>-C_2(3)\max\{0.16,\ls\max\{|a|,|b|\}\}\log|\alpha|\log|\beta|\log{n}.$$
On the other hand we have
$$\log|\Lambda|<\log 2 -(n-m)\log|\beta|.$$
Hence we get that $n-m<c_{23}\log{n}$ (see Section \ref{Sec:Appendix} for an explicit determination of $c_{23}$).
Combining this with the results of Proposition \ref{prop:p-adic}
we get $n<c_7c_{23}(\log n)^3$ provided that $n>c_6$. Thus we find an absolute upper bound $c_{18}$ for $n$ by applying
Lemma \ref{lem:pdw}. Let us note that in Section \ref{Sec:Algorithm}
we explain in more detail how to handle this case in practice.

\section{Reduction of our bounds}\label{Sec:Reduction}

This section is devoted to the problem of reducing the rather large bounds obtained by Theorem \ref{thm1} and
Proposition \ref{prop:explicit} respectively. In this paper we will make use
of the LLL-algorithm due to Lenstra, Lenstra and Lov\'asz \cite{Lenstra:1982} to reduce our upper bounds for
$n-m$ and $z_1,\dots,z_s$.
In the $p$-adic case we use instead of approximation lattices an idea due to Peth\H{o} and de Weger
\cite[Algorithm A]{Pethoe:1986}.

\subsection{Real approximation lattices}\label{Sec:Red1}
Let us start with gathering some basic facts on LLL-reduced bases and approximation lattices. Therefore let
$\LL\subseteq \R^k$ be a $k$-dimensional lattice with LLL-reduced basis
$b_1,\dots,b_k$ and let $B$ be the matrix with columns $b_1,\dots, b_k$. Moreover, we denote by
$b^*_1,\dots,b^*_k$ the orthogonal basis of $\R^k$ which we obtain
by applying the Gram-Schmidt process to the basis $b_1,\dots,b_k$. In particular, we have that
$$b^*_i=b_i-\sum_{j=1}^{i-1}\mu_{i,j}b^*_j, \qquad \mu_{i,j}=\frac{\langle b_i,b_j\rangle}{\langle
b_j^*,b_j^*\rangle}.$$
Further, let us define
\begin{equation*}
l(\LL,y)=\begin{cases}
\min_{x \in \LL} \{\|x-y\|\}, & y \not\in \LL \\
\min_{0 \neq x \in \LL} \{\|x\|\}, & y \in \LL,
\end{cases}
\end{equation*}
where $\|\cdot\|$ denotes the euclidian norm on $\R^k$. It is well known,
that by applying the LLL algorithm it is possible to give in a polynomial time
a lower bound for $l(\LL,y) \geq \tilde c_1$ (see e.g. \cite[Section 5.4]{Smart:DiGL}).

\begin{lemma}\label{lem:lattice}
 Let $y\in \R^k$, $z=B^{-1}y$ and if $y\not\in\LL$ let $i_0$ be the largest index such that $z_{i_0}\neq 0$.
Put $\sigma=\{z_{i_0}\}$, where
 $\{\cdot\}$ denotes the distance to the nearest integer, and in case that $y\in \LL$ we put $\sigma=1$.
Moreover, let
 $$\tilde c_2=\max_{1\leq j\leq k}\left\{\frac{\|b_1\|^2}{\|b_j^*\|^2} \right\}.$$
 Then we have
 $$l(\LL,y)^2\geq \tilde c_2^{-1}\sigma \|b_1\|^2=\tilde c_1.$$
\end{lemma}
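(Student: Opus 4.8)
The plan is to follow the standard argument for giving an effective lower bound on the distance $l(\LL,y)$ via the Gram--Schmidt data of an LLL-reduced basis; the key point is that in the expansion of a lattice vector minus $y$ in the orthogonal basis $b_1^*,\dots,b_k^*$, the coefficient attached to $b_{i_0}^*$ is forced to stay a bounded distance away from $0$ by the choice of $\sigma$.

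First I would set $z=B^{-1}y$, so $y=\sum_{j=1}^k z_j b_j$. Take an arbitrary $x\in\LL$, write $x=\sum_{j=1}^k n_j b_j$ with $n_j\in\Z$, and expand $x-y=\sum_{j=1}^k (n_j-z_j) b_j$ in the orthogonal basis using $b_j=b_j^*+\sum_{\ell<j}\mu_{j,\ell}b_\ell^*$. Collecting terms, the coefficient of $b_{i_0}^*$ in $x-y$ is $(n_{i_0}-z_{i_0})+\sum_{j>i_0}(n_j-z_j)\mu_{j,i_0}$. In the case $y\notin\LL$, the index $i_0$ was chosen as the largest index with $z_{i_0}\neq 0$, so $z_j=0$ for $j>i_0$; hence that coefficient equals $(n_{i_0}-z_{i_0})+\sum_{j>i_0}n_j\mu_{j,i_0}$, which is $z_{i_0}$ minus an expression that need not be an integer because of the $\mu_{j,i_0}$ — so one has to be slightly careful. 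The clean way is to argue by downward induction on $i_0$: if $z_j$ is not already an integer for the top index where it is nonzero one replaces $y$ by $y$ minus the obvious integer combination of the higher $b_j$'s, or more simply one observes that the vector $x-y$ and the lattice point $\sum_{j\le i_0} n_j b_j - y$ differ by a lattice vector in the span of $b_1,\dots,b_{i_0}$, reducing to the situation where $z_j=0$ for $j>i_0$; then the $b_{i_0}^*$-coefficient is exactly $n_{i_0}-z_{i_0}$, whose absolute value is at least $\{z_{i_0}\}=\sigma$. In the case $y\in\LL$ one instead takes $x\neq 0$ in $\LL$; then some coefficient $n_{j_0}-z_{j_0}=n_{j_0}\in\Z\setminus\{0\}$ survives, so the $b_{j_0}^*$-coefficient of $x-y$ has absolute value $\ge 1=\sigma$, and one runs the same estimate with $j_0$ in place of $i_0$.

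With this in hand, by orthogonality of the $b_j^*$ we get $\|x-y\|^2 \ge \sigma^2 \|b_{i_0}^*\|^2$. Now I invoke the defining property of an LLL-reduced basis, which guarantees $\|b_1\|^2 \le 2^{j-1}\|b_j^*\|^2$, and more to the point gives $\|b_j^*\|^2 \ge \|b_1\|^2 / \tilde c_2$ by the very definition of $\tilde c_2=\max_{1\le j\le k}\{\|b_1\|^2/\|b_j^*\|^2\}$. Hence $\|x-y\|^2 \ge \sigma^2 \tilde c_2^{-1}\|b_1\|^2 \ge \sigma\,\tilde c_2^{-1}\|b_1\|^2$, using $\sigma\le 1$ so that $\sigma^2\le\sigma$. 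Taking the minimum over $x$ yields $l(\LL,y)^2 \ge \tilde c_2^{-1}\sigma\|b_1\|^2 = \tilde c_1$, as claimed.

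The only genuine obstacle is the bookkeeping in the first step: making precise why one may assume $z_j=0$ for $j>i_0$ (equivalently, why the non-integrality of the $\mu_{j,i_0}$ does not spoil the lower bound $|{\cdot}|\ge\sigma$ on the $b_{i_0}^*$-coefficient). This is handled either by the reduction-modulo-higher-basis-vectors argument sketched above or, as in \cite[Section 5.4]{Smart:DiGL}, by treating the coefficient of $b_{i_0}^*$ directly and noting that $z_{i_0}$ is the only non-integer contribution to it since all higher $z_j$ vanish; the remaining inequalities are routine.
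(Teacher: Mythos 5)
Your overall strategy (expand $x-y$ in the Gram--Schmidt basis, lower-bound one coefficient by $\sigma$, then use $\tilde c_2$) is the right one, but there are two genuine problems in the execution, and the second one points to what is almost certainly a typo in the lemma itself.

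First, the handling of the coefficient of $b_{i_0}^*$ in the case $y\notin\LL$ does not go through. You correctly observe that this coefficient is $(n_{i_0}-z_{i_0})+\sum_{j>i_0}n_j\mu_{j,i_0}$, but your proposed fix is not valid: the difference between $x-y$ and $\sum_{j\le i_0}n_jb_j-y$ is $\sum_{j>i_0}n_jb_j$, which lies in the span of $b_{i_0+1},\dots,b_k$ (not of $b_1,\dots,b_{i_0}$), and replacing $x$ by $\sum_{j\le i_0}n_jb_j$ does not control $\|x-y\|$ from below. Likewise ``$z_{i_0}$ is the only non-integer contribution'' is false, since the $\mu_{j,i_0}$ are generically irrational, so $\sum_{j>i_0}n_j\mu_{j,i_0}$ is not an integer. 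The clean argument does not look at the coefficient of $b_{i_0}^*$ at all; instead one sets $j_0=\max\{j:n_j-z_j\neq0\}$ (which depends on $x$), notes that the coefficient of $b_{j_0}^*$ in $x-y$ is exactly $n_{j_0}-z_{j_0}$, and splits into cases: if $j_0>i_0$ then $z_{j_0}=0$ and the coefficient is a nonzero integer, so its modulus is $\ge 1\ge\sigma$; if $j_0=i_0$ its modulus is $|n_{i_0}-z_{i_0}|\ge\{z_{i_0}\}=\sigma$; and if $j_0<i_0$ then $z_{i_0}=n_{i_0}\in\Z$, forcing $\sigma=0$ and making the bound trivial. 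Since in every case $\|x-y\|^2\ge\sigma^2\|b_{j_0}^*\|^2\ge\sigma^2\tilde c_2^{-1}\|b_1\|^2$, one obtains $l(\LL,y)^2\ge\sigma^2\tilde c_2^{-1}\|b_1\|^2$.

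Second, your last inequality is reversed: from $\sigma\le1$ you get $\sigma^2\le\sigma$, hence $\sigma^2\tilde c_2^{-1}\|b_1\|^2\le\sigma\tilde c_2^{-1}\|b_1\|^2$, not $\ge$. So you cannot pass from the $\sigma^2$ bound to the $\sigma$ bound in the way you wrote. In fact the lemma as printed cannot be correct with $\sigma$ to the first power: take $k=1$, $\LL=\Z$, $b_1=b_1^*=1$, $y=1/2$; then $\tilde c_2=1$, $\sigma=1/2$, and $l(\LL,y)^2=1/4<1/2=\tilde c_2^{-1}\sigma\|b_1\|^2$. The standard statement in the literature (e.g.\ Smart, \emph{The algorithmic resolution of Diophantine equations}, Section 5.4) has $\sigma^2$ here, and your own computation naturally produces $\sigma^2$; you should state and use that version rather than trying to force a match with the weaker-looking $\sigma$.
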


In our applications suppose we are given $\eta_0,\eta_1,\dots,\eta_k$ real numbers linearly independent over
$\Q$ and two positive constants
$\tilde c_3,\tilde c_4$ such that
\begin{equation} \label{eq:redform1}
|\eta_0+x_1\eta_1+\dots+x_k\eta_k| \le \tilde c_3\exp(-\tilde c_4H),
\end{equation}
where the integers $x_i$ with $1\leq i \leq k$ are bounded by $|x_i| \leq X_i$ with $X_i$ given upper bounds for all $1 \leq i \leq k$.
Set $X_0=\max_{1 \le i \le s}\{X_i\}$. The basic idea in such a situation, due to de Weger \cite{deWeger:1987},
is to approximate the linear form
\eqref{eq:redform1} by an approximation lattice. Namely, we consider the lattice $\LL$ generated by the
columns of the matrix
$$
\begin{pmatrix}
    1 & 0 & \dots & 0  & 0 \\
    0 & 1 & \dots & 0  & 0 \\
    \vdots & \vdots & \vdots & \vdots & \vdots \\
    0 & 0 & \dots & 1  & 0 \\
    \lfloor{C\eta_1}\rfloor & \lfloor{C\eta_2}\rfloor & \dots & \lfloor{C\eta_{k-1}}\rfloor &
\lfloor{C\eta_k}\rfloor
\end{pmatrix}
$$
where $C$ is a large constant usually of the size about $X_0^k$. Let us assume that we have an LLL-reduced
basis $b_1,\dots,b_k$ of $\LL$ and that we have a lower bound
$l(\LL,y)\geq \tilde c_1$ with $y=(0,0,\dots ,-\lfloor{C\eta_0}\rfloor)$. Then we have with these notations the
following Lemma concerning inequality \eqref{eq:redform1} (c.f. \cite[Lemma VI.1]{Smart:DiGL}):

\begin{lemma}\label{lem:real-reduce}
Assume that $S=\sum_{i=1}^{k-1}X_i^2$ and $T=\frac{1+\sum_{i=1}^k{X_i}}{2}$. If $\tilde c_1^2 \ge T^2+S$, then
we have either $x_1=x_2=\dots=x_{k-1}=0$ and $x_k=-\frac{\lfloor{C\eta_0}\rfloor)}{\lfloor{C\eta_k}\rfloor)}$
or
\begin{equation} \label{eq:reduction-real}
H \leq \frac{1}{\tilde c_4}\left(\log(C\tilde c_3)-\log\left(\sqrt{\tilde c_1^2-S}-T\right) \right).
\end{equation}
\end{lemma}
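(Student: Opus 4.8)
The plan is to confront the LLL lower bound from Lemma~\ref{lem:lattice} with an elementary upper estimate obtained from \eqref{eq:redform1}, both evaluated at one carefully chosen lattice point attached to a hypothetical solution.

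First I would attach to a solution $(x_1,\dots,x_k)$ of \eqref{eq:redform1} with $|x_i|\le X_i$ the lattice vector $\mathbf{x}\in\mathcal{L}$ given by the integer combination of the columns of the defining matrix with coefficients $x_1,\dots,x_k$, so that $\mathbf{x}=(x_1,\dots,x_{k-1},\sum_{i=1}^{k}x_i\lfloor C\eta_i\rfloor)$ and $\mathbf{x}-y=(x_1,\dots,x_{k-1},\lfloor C\eta_0\rfloor+\sum_{i=1}^{k}x_i\lfloor C\eta_i\rfloor)$. I would then split into two cases. If $\mathbf{x}=y$, the first $k-1$ coordinates force $x_1=\dots=x_{k-1}=0$ and the last coordinate forces $x_k\lfloor C\eta_k\rfloor=-\lfloor C\eta_0\rfloor$, which is exactly the exceptional alternative of the statement. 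Otherwise $\mathbf{x}\ne y$, and — whether or not $y$ happens to lie in $\mathcal{L}$ — the definition of $l(\mathcal{L},y)$ together with Lemma~\ref{lem:lattice} gives $\|\mathbf{x}-y\|\ge l(\mathcal{L},y)\ge\tilde c_1$.

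Next I would bound $\|\mathbf{x}-y\|$ from above. The first $k-1$ coordinates contribute at most $S=\sum_{i=1}^{k-1}X_i^2$ to $\|\mathbf{x}-y\|^2$. For the last coordinate I would write each matrix entry as $C\eta_i$ up to an error of modulus $\le\tfrac{1}{2}$ (which is what taking the nearest integer gives; this is the intended meaning of the floors in the displayed matrix), so that $\lfloor C\eta_0\rfloor+\sum_i x_i\lfloor C\eta_i\rfloor=C(\eta_0+\sum_i x_i\eta_i)+\theta$ with $|\theta|\le\tfrac{1}{2}(1+\sum_i|x_i|)\le T$; invoking \eqref{eq:redform1} and $|x_i|\le X_i$ then gives $|\lfloor C\eta_0\rfloor+\sum_i x_i\lfloor C\eta_i\rfloor|\le C\tilde c_3\exp(-\tilde c_4 H)+T$. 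Hence $\tilde c_1^2\le\|\mathbf{x}-y\|^2\le S+(C\tilde c_3\exp(-\tilde c_4 H)+T)^2$. Finally, the hypothesis $\tilde c_1^2\ge T^2+S$ ensures $\sqrt{\tilde c_1^2-S}\ge T$, so this rearranges to $C\tilde c_3\exp(-\tilde c_4 H)\ge\sqrt{\tilde c_1^2-S}-T\ge 0$; if the right-hand side is $0$ the asserted bound on $H$ is vacuous, and otherwise taking logarithms and solving for $H$ produces exactly \eqref{eq:reduction-real}.

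I do not expect a genuine obstacle here, since this is the standard de Weger reduction step (cf.\ \cite[Lemma~VI.1]{Smart:DiGL}). The only points deserving care are: keeping the rounding perturbation bounded by $T$ (hence the convention of rounding the matrix entries to the nearest integer), matching the degenerate case $\mathbf{x}=y$ cleanly with the displayed exceptional alternative, and observing that $\|\mathbf{x}-y\|\ge\tilde c_1$ holds uniformly whether $y\in\mathcal{L}$ or $y\notin\mathcal{L}$.
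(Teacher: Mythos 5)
Your proof is correct and is precisely the standard de Weger reduction argument; the paper itself gives no proof but refers to Lemma~VI.1 of Smart's book, which proceeds exactly as you do (lattice point from the solution, lower bound from Lemma~\ref{lem:lattice}, upper bound from the linear form, compare). Your observation that the $\lfloor\,\cdot\,\rfloor$ in the matrix must be read as nearest-integer rounding for the perturbation bound $|\theta|\le T$ to hold is the right one — with genuine floors one would only get $|\theta|\le 2T$, which would not match the stated $T=\tfrac{1}{2}(1+\sum X_i)$ — and the careful handling of the case $\mathbf{x}=y$ and of $l(\mathcal L,y)$ for $y\in\mathcal L$ versus $y\notin\mathcal L$ is exactly as it should be.
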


We will apply Lemma \ref{lem:real-reduce} to inequality  \eqref{eq:linform-ieq}, i.e. $k=s+1$ and $x_i=z_i$ for
$i=1,\dots,s$ and $x_{s+1}=n$. For the coefficients $\eta_i$ with $0\leq i \leq s+1$ we take
$\eta_0=\log \gamma$, $\eta_i=\log p_i$ for $1\leq i \leq s$ and $\eta_{s+1}=\log |\alpha|$. Furthermore we
have $\tilde c_3=2\left(1+\frac{2|b|}{|a|}\right)$
and $\tilde c_4=\log\left(\min\left\{\frac{|\alpha|}{|\beta|},|\alpha|\right\}\right)$.

For sure $\eta_1,\dots,\eta_s$ are linearly independent over $\Q$ and it is
easy to find a multiplicative dependence of $\gamma$ and/or $\alpha$ of the $p_i$ with $i=1,\dots,s$. However,
let us assume for the moment that $\log|\gamma|,\log p_1,\dots,\log p_s$
and $\log|\alpha|$ are linearly independent over $\Q$ (a similar argument holds if this is not the case). Let
us also assume that $C$ was chosen large enough such that
$$\tilde c_1 ^2 > \sum_{i=1}^s c_{21,i}^2+\left(\frac{\sum_{i=1}^s c_{21,i} +c_{20}}2\right)^2=T^2+S.$$
Then we get by Lemma \ref{lem:real-reduce} either a new bound for $H=n-m$ or $z_1=\dots z_s=0$ and our
Diophantine equation \eqref{eq:main} reduces to $u_n+u_m=w$ wich can be easily solved by
brute force if $w$ is not too large.

Let us note that a new bound for $n-m$ immediately yields new upper bounds for $z_i$ with $1\leq i \leq s$ and
$n$ by Proposition \ref{prop:p-adic}.
We can apply the same trick once again with these new bounds and obtain again a further reduction of the
bounds for $z_i$ with $1\leq i \leq s$ and $n$. We can repeat this as long as our reduction method
yields smaller upper bounds for $n-m$.

\subsection{The $p$-adic reduction method}\label{Sec:Red2}

In the $p$-adic case it is also possible to use so-called $p$-adic approximation lattices (see e.g. \cite[Section 5.6]{Smart:DiGL}).
However we are only interested in a very special case
namely in the situation appearing in inequality \eqref{eq:p-adic-form} under the assumption that
$n-m=t$ is a fixed (small) integer. The following is based on an idea due to Peth\H{o} and de Weger
\cite[Algorithm A]{Pethoe:1986}. We reproduce theire idea and fit it into our frame work.

Let us fix an index $i$ with $1\leq i \leq s$. In order to avoid an overloaded notation we drop the index $i$
for the rest of this section.
Furthermore let us assume that $N$ and $Z$ are given upper bounds for $n$ and $z$ respectively.
Let us consider the $p$-adic valuation of the left and right side of \eqref{eq:p-adic-form}. Then we obtain
(as in Section \ref{Sec:p-adic})
$$\nu_p\left(\tau(t) \left(\frac{\beta}{\alpha}\right)^m-1\right)=z-\nu_p(a(\al^t+1))+\nu_p(\alpha-\beta)=z -
z_0,$$
where
$$\tau(t)= \frac{b(\be^t+1)}{a(\al^t+1)}$$
and $z_0$ is easily computable for a fixed $t$. Let us assume that $z \geq 3/2+z_0=\tilde c_5$, then we may
take $p$-adic logarithms and obtain
\begin{equation}\label{eq:p-adic-reduction}
\nu_p\left(\log_p(\tau(t))-m\log_p(\alpha/\beta)\right)=z-z_0.
\end{equation}
We distinguish now between two cases whether $\log_p(\tau(t))=0$ or not.

Let us discuss the rather unlikely case that $\log_p(\tau(t))=0$ first. In this case equation~\eqref{eq:p-adic-reduction} turns into
$$\nu_p(m\log_p(\alpha/\beta))=\nu_p(m)+\nu_p(\log_p(\alpha/\beta))=z-z_0,$$
i.e.
$$z<\frac{\log m}{\log p}+\nu_p(\log_p(\alpha/\beta))+z_0<\frac{\log N}{\log
p}+\nu_p(\log_p(\alpha/\beta))+z_0=\tilde c_6$$
and $\max\{\tilde c_5,\tilde c_6\}$ is a new upper bound for $z$.

Let us turn to the case that $\log_p(\tau(t))\neq 0$. Since $\alpha$ and $\beta$ are
conjugate in $\Q(\sqrt{\Delta})\subset \Q_p(\sqrt{\Delta})$ also
$\log_p(\alpha)$ and $\log_p(\beta)$ are conjugate, hence
$\log_p(\alpha/\beta)=\log_p(\alpha)-\log_p(\beta)\in \sqrt{\Delta} \Q_p$.
Similarly we get $\log_p(\tau(t))\in \sqrt{\Delta} \Q_p$, since $\tau(t)$ is the quotient of conjugates. In
particular, we get
$$\zeta=\log_p(\tau(t))/\log_p(\alpha/\beta)=u_0+u_1p+u_2p^2+\dots \in\Q_p.$$
Let $r$ be the smallest possible exponent such that $p^r>N$ and let $0\leq m_0 < p^r$ be the unique integer
such that $m_0\equiv \zeta \mod p^r$.
Moreover, let $R$ be the smallest index $\geq r$ such that $u_R\neq 0$, if such an index exists. Then we get
\begin{align*}
z-z_0=&\nu_p\left(\log_p(\tau(t))-m\log_p(\alpha/\beta)\right)\leq
\nu_p\left(\log_p(\tau(t))-m_0\log_p(\alpha/\beta)\right)\\
=&\nu_p (\log_p(\tau(t))+\nu_p\left(1-(\zeta-u_Rp^R-\dots)\frac{\log_p(\alpha/\beta)}{\log_p(\tau(t))}
\right)\\
=&\nu_p (\log_p(\tau(t))+R+\nu_p\left(\frac{\log_p(\alpha/\beta)}{\log_p(\tau(t))} \right).\\
\end{align*}
Therefore we get a new upper bound for $z$ namely
$$z\leq \nu_p(\log_p(\tau(t))+R+\nu_p\left(\frac{\log_p(\alpha/\beta)}{\log_p(\tau(t))} \right).$$
Let us discuss the case, in which $R$ does not exist. In this case we would obtain that
$\zeta=m_0$ is an integer, hence
$$\log_p(\tau(t))=m_0\log_p\left(\alpha/\beta)\right),$$
and \eqref{eq:p-adic-reduction} turns into
$$
\nu_p\left(\log_p(\tau(t))-m\log_p(\alpha/\beta)\right)=\nu_p\left((m-m_0)\log_p(\alpha/\beta)\right) =z-z_0.
$$
Therefore we are in a similar situation as in the case that $\log_p(\tau(t))=0$ and we also get in this case a
new upper bound for $z$.

In any case we get for each index $i$ with $1\leq i \leq s$ a new upper bound $z_i$ and therefore by Lemma
\ref{lem:bounds} \textit{(iv)} a new upper bound for $n$.
We can repeat the procedure with these new, small upper bounds as long as we get smaller upper bounds for $n$.

\section{The Algorithm}\label{Sec:Algorithm}

At this point the inclined reader may have already a farely well idea how to solve Diophantine equations of
type \eqref{eq:main} in practice. However, let us
summerize the key steps and give some comments on the practical implementation. The algorithm to solve
Diophantine equation \eqref{eq:main} can be splitted up into
six key steps:\\

\noindent\textbf{Step I - First upper bounds for $n$ and $z_i$ for all $1\leq i\leq s$:} Compute one after the
other the constants $c_1,\cdots $ until you have found the
upper bounds $n<c_{20}$ and $z_i<c_{21,i}$ with $1\leq i \leq s$. Using the explicit determination of the
constants given in Section \ref{Sec:Appendix} this is straight forward.\\

\noindent\textbf{Step II - The case that $n=m$:} If $w$ and all $p_i$ with $1\leq i \leq s$ are odd, this case
has no solution. Otherwise in case that $w$ is even we replace $w$ by $w/2$ or
if $p_1=2$ we replace $z_1$ by $z_1-1$ and compute upper bounds for $n<c_{13}$ and $z_i<c_{12,i}$ for $1\leq i
\leq s$. With these upper bounds we use our $p$-adic reduction procedure from
Section \ref{Sec:Red2} with $t=1$ and $\tau(t)=b/a$ and obtain a small bound for $n$, which is in most cases
small enough to performe a brute force search.\\

\noindent\textbf{Step III - The case that $b\beta^n-a\alpha^m+b\beta^m=0$:} If $\Delta$ is not a perfect
square or $\beta=\pm 1$ there are no solutions in this case and we may omitt this step.
Otherwise we may assume that $\alpha,\beta,a$ and $b$ are rational integers and that $|\beta|\geq 2$.

In the case that $\beta\nmid \alpha$ we obtain the equation
$$\frac{a}{b} \left(\frac \alpha\beta\right)^m=\beta^{n-m}+1$$
We may assume that $\alpha/\beta=P/Q$ with $Q>1$ and $P,Q$ are coprime integers. Then $m\leq \log Q/\log a$
since otherwise the left hand side cannot be an integer. This presumably small bound for $m$ yields
also a small bound for $n$ and a brute force search will resolve this case.

Therefore we may assume that $\alpha/\beta=P$ is an integer and let us assume that $p$ is the largest prime
factor dividing $P$ and let us assume for the moment that $m>1$.
Then we have that
\begin{align*}
 m-\nu_p(b)=& \nu_p(\beta^{n-m}-1)\\
 =&\nu_p(\log_p(\beta))+\nu_p(n-m)\\
 <& \nu_p(\log_p(\beta))+\log (c_{23} \log c_{18}).
\end{align*}
Usually this will yield a reasonable small bound for $m$ and $n$ to perform a brute force search. If the new
upper bound $\max\{n,m\}<\tilde C$ is still large one may use once again
the above inequality to obtain
\begin{align*}
 m-\nu_p(b)=&\nu_p(\log_p(\beta))+\nu_p(n-m)\\
 <& \nu_p(\log_p(\beta))+\frac{\log(\tilde C)}{\log p}.
\end{align*}
This may be repeated until no further improvement on the bound for $\max\{n,m\}$ is possible and one has to
perform a brute force search for possible solutions to Diophantine equation \eqref{eq:main}.\\

\noindent\textbf{Step IV - First reduction of $n$:}
Perform the reduction step described in Section~\ref{Sec:Red1} to obtain upper bounds $n<N$, $z_i<Z_i$ for
$1\leq i \leq s$ and $n-m\leq T_0$. \\

\noindent\textbf{Step V - Second reduction of $n$:}
Perform the reduction step described in Section~\ref{Sec:Red2} for each $1\leq t \leq T_0$ and obtain for each
$t$ upper bounds $n<\tilde N_t$ and $z_i<\tilde Z_{i,t}$ for $1\leq i \leq s$.
Thus we get new upper bounds for $n<\tilde N=\max_{1\leq t\leq T_0}\{\tilde N_t\}$ and $z_i<\tilde
Z_i=\max_{1\leq t\leq T_0}\{\tilde Z_{i,t}\}$ for $1\leq i \leq s$.\\

\noindent\textbf{Step VI - Brute force search:}
The upper bounds $\tilde N$ and $\tilde Z_i$ for $1\leq i \leq s$ obtained in Step V are usually rather small.
However replacing $c_{23}$ by $\tilde N$ and $c_{22,i}$ by $\tilde Z_i$ for $1\leq i \leq s$
we can go back to Step IV and after performing the reduction steps IV and V again maybe we obtain sharper
bounds for $n$ and $z_i$ with $1\leq i \leq s$. This can be repeated until now further improvement
is possible. And we have to check the remaing cases by a brute force search. For instance we compute for all
$1\leq n\leq m \leq \tilde N$ the values of $u_n+u_m$ and write them into a list $\mathcal L$. For each element from
the list $\mathcal L$ we perform a trial division including the primes $p_1,\dots,p_s$. If $\tilde N$ and $P=\max\{p_1,\dots,p_s\}$ are
not unusually large, say $\tilde N,P< 10000$ this brute force search can be done within a reasonable time (see Section \ref{Sec:Example}).

\section{An example}\label{Sec:Example}

In this section we illustrate our algorithm by two examples. We completely solve Diophantine equation \eqref{eq:main}
in the case that $w=1$, $p_1,\dots,p_{46}$ are all primes smaller than $200$ and $u_n$ is the Fibonacci sequence
or the Lucas sequence, respectively. We have the following theorem:

\begin{theorem} \label{th:example}
\begin{enumerate}[(i)]
\item Let $\{F_n\}_{n \ge 0}$ be the Fibonacci sequence defined by $F_0=0, F_1=1$ and $F_n=F_{n-1}+F_{n-2}$ for
$n \ge 2$. Consider the equation
\begin{equation} \label{eq:fib}
F_n+F_m=2^{z_1}3^{z_2} \dots 199^{z_{46}}
\end{equation}
in non-negative integer unknowns $n,m,z_1,\dots,z_{46}$ with $n \ge m$. Then there are $325$ solutions
$(n, m, z_1 , \dots , z_{46})$ and there exists no solution with $\max\{n,m\}>59$.

\item Let $\{L_n\}_{n \ge 0}$ be the Lucas sequence defined by $L_0=2, L_1=1$ and $L_n=L_{n-1}+L_{n-2}$ for
$n \ge 2$. Consider the equation
\begin{equation} \label{eq:lucas}
L_n+L_m=2^{z_1}3^{z_2} \dots 199^{z_{46}}
\end{equation}
in non-negative integer unknowns $n,m,z_1,\dots,z_{46}$ with $n \ge m$. Then there are $284$ solutions
$(n, m, z_1 , \dots , z_{46})$ and there exists no solution with $\max\{n,m\}>63$.
\end{enumerate}
\end{theorem}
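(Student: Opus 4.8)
The plan is to apply the full algorithm of Sections \ref{Sec:notation}--\ref{Sec:Algorithm} to the two concrete sequences at hand, namely the Fibonacci sequence (with $A=B=1$, $u_0=0$, $u_1=1$, so $\alpha=\varphi$, $\beta=\bar\varphi=-1/\varphi$, $a=1$, $b=1$, $\Delta=5$) and the Lucas sequence (with the same $A,B$ but $u_0=2$, $u_1=1$, so $a=\alpha=\varphi$, $b=\beta=-1/\varphi$ up to the usual normalisation, $\Delta=5$ again). In both cases $w=1$, $s=46$ and $\{p_1,\dots,p_{46}\}=\{2,3,5,\dots,199\}$, and since $5=p_3$ the hypothesis $p_i\nmid\gcd(A,B)=1$ is vacuously satisfied. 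One checks that neither exceptional case of Theorem \ref{thm1} can occur: $|\beta|=1/\varphi<1$ so $\beta\neq\pm1$, which rules out both bullets. Hence Theorem \ref{thm1} and Proposition \ref{prop:explicit} apply and give effective bounds $n<c_{20}$, $z_i<c_{21,i}$; following Section \ref{Sec:Appendix} one computes these constants explicitly. This is \textbf{Step I} of the algorithm.

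Next I would run \textbf{Steps II and III}. For $n=m$ we are reduced (Step II) to $u_n=2^{z_1-1}3^{z_2}\cdots 199^{z_{46}}$ (since $w=1$ is odd and $p_1=2$), to which Proposition \ref{prop:n=m} and then the $p$-adic reduction of Section \ref{Sec:Red2} with $t=1$, $\tau(1)=b/a$ applies; a short brute-force search over the remaining small range finishes this subcase. Step III asks whether $b\beta^n-a\alpha^m+b\beta^m=0$ can hold with $\Delta$ a perfect square: here $\Delta=5$ is \emph{not} a perfect square, so by the argument in Section \ref{Sec:Proof of Lemma} (the conjugation argument yielding $-b\beta^n=a\alpha^n$) this case forces $n$ bounded by $c_{22}=\log|b/a|/\log|\alpha/\beta|$, which for both sequences is tiny; in fact $\Lambda\neq 0$ throughout, so Lemma \ref{lem:vanishing} is not even needed here. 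Thus Step III is immediate.

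The heart of the computation is \textbf{Steps IV--VI}: the LLL-reduction of inequality \eqref{eq:linform-ieq}. Here $k=s+1=47$, the $\eta_i$ are $\log\gamma,\log 2,\log 3,\dots,\log 199,\log\varphi$ with $\gamma=\sqrt5$ (Fibonacci) or $\gamma=\sqrt5/\varphi$ (Lucas), $\tilde c_3=2(1+2|b|/|a|)$ and $\tilde c_4=\log(\min\{|\alpha|/|\beta|,|\alpha|\})=\log\varphi^2=2\log\varphi$ in the Fibonacci case (and $\log\varphi$ in the Lucas case since there $|\alpha|=\varphi<\varphi^2=|\alpha/\beta|$). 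One picks $C\approx X_0^{47}$ with $X_0=\max\{c_{20},c_{21,i}\}$, computes an LLL-reduced basis, verifies the lower bound $l(\LL,y)\ge\tilde c_1$ of Lemma \ref{lem:lattice} and the condition $\tilde c_1^2\ge T^2+S$ of Lemma \ref{lem:real-reduce}, and thereby gets a new, much smaller bound on $H=n-m$, hence (via Proposition \ref{prop:p-adic}) on $z_i$ and $n$. Feeding these back into Step V, one runs the $p$-adic reduction of Section \ref{Sec:Red2} for each $1\le t\le T_0$, obtaining $n<\tilde N$; one iterates Steps IV--V until no further improvement, and then performs the brute-force search of Step VI over $0\le m\le n\le\tilde N$, trial-dividing each $u_n+u_m$ by $2,3,\dots,199$ and recording which of these is $2^{z_1}\cdots 199^{z_{46}}$. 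This enumeration produces the claimed $325$ (resp.\ $284$) solutions and confirms that none has $\max\{n,m\}>59$ (resp.\ $>63$).

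The main obstacle is purely computational rather than conceptual: the dimension $47$ lattice and the size of $C\approx X_0^{47}$ make the first LLL step numerically heavy, and one must be careful that the iterated reduction actually stabilises at a bound $\tilde N$ small enough (a few hundred at most) for the final brute-force search over all pairs $(m,n)$ and trial division by $46$ primes to be feasible; the analysis of Section \ref{Sec:Reduction} guarantees that each step strictly decreases the bound until it can no longer, so only the constant in the stabilised bound — and the running time — is at issue. A minor secondary point is bookkeeping the multiplicative relations among $\log\gamma$, the $\log p_i$ and $\log\varphi$ (e.g.\ $\gamma^2=5=p_3$ gives a dependence), which is handled by the parenthetical remark in Section \ref{Sec:Red1} that a similar argument applies in the dependent case.
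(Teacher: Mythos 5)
Your proposal follows exactly the same route as the paper: both are a step-by-step instantiation of Steps I--VI of the algorithm from Section \ref{Sec:Algorithm} with the Fibonacci (resp.\ Lucas) parameters, starting from a Matveev-type bound of astronomical size, reducing it by one pass of the real LLL-reduction of Section \ref{Sec:Red1}, then several passes of the $p$-adic reduction of Section \ref{Sec:Red2}, and closing with a brute-force search. That plan is correct, and your remark that the remaining work is essentially computational is exactly what the paper's proof does, recording the concrete intermediate bounds $2.6\cdot 10^{117}$, then $n-m\le 30000$, $n-m\le 6010$, and finally $m\le n\le 2300$, after which trial division yields the solution count.

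That said, there are a couple of arithmetic slips in your concrete parameter choices which would corrupt the computation if carried out literally. First, $\tilde c_4=\log\bigl(\min\{|\alpha|/|\beta|,|\alpha|\}\bigr)=\log\varphi$ in \emph{both} cases, not $2\log\varphi$ for Fibonacci: $|\alpha|=\varphi<\varphi^2=|\alpha/\beta|$, so the minimum is $\varphi$; your own parenthetical justification for the Lucas case applies verbatim to Fibonacci, since $\alpha$ and $\beta$ are identical for the two sequences. Overestimating $\tilde c_4$ halves the reduced bound $H\le\frac{1}{\tilde c_4}\bigl(\log(C\tilde c_3)-\log(\sqrt{\tilde c_1^2-S}-T)\bigr)$ coming from Lemma \ref{lem:real-reduce}, which risks discarding genuine solutions. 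Second, for the Lucas sequence the constants are $a=u_1-u_0\beta=\sqrt5$ and $b=u_1-u_0\alpha=-\sqrt5$ (so that $L_n=\alpha^n+\beta^n$ by \eqref{eq:binet}), not $a=\alpha$, $b=\beta$; consequently $\gamma=w\sqrt\Delta/a=1$ and $\eta_0=\log|\gamma|=0$, not $\log(\sqrt5/\varphi)$. (Since $|a|=|b|$ one still gets $\tilde c_3=6$ and $c_{22}=0$, so Steps III and IV stay manageable.) These corrections do not alter the conceptual plan, which matches the paper's, but they are needed for the enumeration to stabilise at the stated bounds and to produce the claimed $325$ and $284$ solutions with maxima $59$ and $63$.
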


We refrain from giving a list of all $325$ and $284$ solutions to Diophatine equations \eqref{eq:fib} and \eqref{eq:lucas}, respectively.
Let us note that a brute force computer search for all solutions to \eqref{eq:fib} and \eqref{eq:lucas} with  $m\leq n\leq 59$ and $m\leq n\leq 63$
respectively is a matter of a few seconds on a usual PC.

However looking through the solutions to Diophatine equations \eqref{eq:fib} and \eqref{eq:lucas} respectively we noticed the following
interesting facts.

\begin{itemize}
 \item $z_1 \leq 8$ and $z_2 \leq 6$ in both cases.
 \item In the case of equation \eqref{eq:fib} we have that $z_3,z_5\leq 3$ and for all other indices $\neq 1,2,3,5$ we have that $z_i\leq 2$.
 \item In the case of equation \eqref{eq:lucas} we have that $z_3\leq 3$ and for all other indices $i\geq 4$ we have that $z_i\leq 2$.
 \item For all solutions to equation \eqref{eq:fib} the exponents of $79,83,131,139,163$ and $167$ are always zero.
 \item For all solutions to equation \eqref{eq:lucas} the exponent of $73,149,157,173,181,191$, $193$ and $197$ are always zero.
\end{itemize}

\begin{proof}[Proof of Theorem \ref{th:example}]
We will give the computational details of our algorithm only in the resolution of Diophantine equation \eqref{eq:fib}.
The resolution of Diophantine equation \eqref{eq:lucas} is similar and of the same computational time.
In the proof follow the steps given in the preceding section. An easy computation shows that if $u_n$ is the Fibonacci sequence, then we have
$a=b=1, \alpha=\frac{1+\sqrt{5}}{2}, \beta=\frac{1-\sqrt{5}}{2}, \sqrt{\Delta}=\alpha-\beta=\sqrt{5}$.\\

\noindent\textbf{Step I - First upper bounds for $n$ and $z_i$ for all $1\leq i\leq s$:}
By using the explicit determination of the constants given in Section \ref{Sec:Appendix} we
obtain by a straight forward computation that
\begin{equation} \label{eq: fib1}
\max\{n,z_i\}<2.6 \cdot 10^{117}, \quad (1 \le i \le 46).
\end{equation}\\

\noindent\textbf{Step II - The case that $n=m$:}
It is clear, that in this case we may suppose that $z_1 \ge 1$, since otherwise we do not have a solution.
Now, we replace $z_1$ with $z_1-1$ and compute the initial upper bounds $n<c_{13}$ and $z_i<c_{12,i} \ (1 \le
i \le 46)$ and get $\max\{n,z_i\}<N:=1.4 \cdot 10^{23}$. For every $p=p_i \ (1 \le i \le 46)$ we perform the $p$-adic
reduction procedure
described in Section \ref{Sec:Red2} with $t=1$ and $\tau(t)=b/a=1$. Since $\alpha-\beta=\sqrt{5}$ and $a=1$
we obtain that $z_0=\nu_p(a(\alpha^t+1))-\nu_p(\alpha-\beta) \le 1$ and $\log_{p}(\tau(t))=\log_{p}(1)=0$
holds for every $p=p_i \ (1 \le i \le 46)$.
Further, $\nu_p(\log_p(\alpha/\beta)) \le 2$ and hence we get that
\begin{equation*}
z=z_i<\frac{\log{(1.4 \cdot 10^{23})}}{\log{p}}+2+1=Z_{0,i}  \quad (1 \le i \le 46).
\end{equation*}
Note that the values $Z_{0,i}$ form a vector
\begin{multline*}
Z_{0}=(79,51,36,30,25,23,21,21,19,18,18,17,17,17,16,16,16,15,15,15,15,
15,\\15,14,14,14,14,14,14,14,14,13,13,13,13,13,13,13,13,13,13,13,13,13,13,13).
\end{multline*}
Since in our case $c_5<6$, we infer by \textit{(iv)} of Lemma \ref{lem:bounds} that
\begin{equation*}
n<\frac{\sum_{i=1}^{46}Z_{0,i}(\log{p_i})}{\log\left(\frac{1+\sqrt{5}}{2}\right)}+6 \le 6100.
\end{equation*}
By repeating the above reduction step once more, now with $N=6100$, we obtain $n \le 1771$.\\

\noindent\textbf{Step III - The case that $b\beta^n-a\alpha^m+b\beta^m=0$:}
One can easily see that this case cannot occur if $\{u_n\}_{n \ge 0}=\{F_n\}_{n \ge 0}$.\\

\noindent\textbf{Step IV - First reduction of $n$:} We may suppose that $n > m$.
We apply the reduction method described in Section \ref{Sec:Red1} with the initial upper bound
provided by Step I. Namely $\max\{n,z_i\}<X_{00}:=2.6 \cdot 10^{117}$ and choose $C:=10^{6300}$. Further,
put $\tilde{c}_3=6$, $\tilde{c}_4=\log{\frac{1+\sqrt{5}}{2}}$, $k=47$, $x_i=z_i$ with $1 \le i \le 46$, $x_{47}=n$
and $\eta_0=\log(\sqrt{5})$, $\eta_i=\log{p_i}$ with $1 \le i \le 46$. After performing the LLL algorithm we found
that $(\tilde{c}_2)^{-1}:=1/25000$ and $\sigma:=0.49$ is an appropriate choice.
Thus, we may apply Lemma \ref{lem:real-reduce} and we obtain a reduced bound $n-m\leq T_1=30000$.

Inserting this bound for $n-m$ into Proposition \ref{prop:p-adic} we are able to derive a new, better
bound for $\max\{n,z_i\}$.
An easy calculation shows that $\max\{n,z_i\}<X_{01}:=199^{10}$ is an appropriate choice.
By repeating once more the above procedure, with $X_{01}:=199^{10}, C:=10^{1280}, (\tilde{c}_2)^{-1}:=1/81550$
and $\sigma:=0.49$ we obtain $n-m \le T_2=6010$.\\

\noindent\textbf{Step V - Second reduction of $n$:}
We perform the reduction step described in Section~\ref{Sec:Red2} for each $1\leq t \leq T_2=6010$. Since
$\max\{n,z_i\}<X_{01}:=199^{10}$ we apply
the reduction step described in Section~\ref{Sec:Red2} for each prime $p=p_i$ with $1 \le i \le 46$ and for each
$1\leq t \leq T_2=6010$.
In particular, we obtain that $z=z_i \le Z_{1,i}$ with $1 \le i \le 46$, where the $Z_{1,i}$ form a vector
\begin{multline*}
 Z_1=(92, 57, 41, 37, 24, 27, 25, 19, 22, 20, 17, 21, 18, 20, 18, 20, 16, 16, 17, 14, 17, 14,\\
15, 15, 15, 13, 16, 16, 15, 17, 14, 13, 14, 14, 13, 12, 14, 14, 14, 14, 12, 13, 13, 14, 15, 14).
\end{multline*}
Since in our case $c_5<6$ we infer by \textit{(iv)} of Lemma \ref{lem:bounds} that
$$
n<\frac{\sum_{i=1}^{46}Z_{1,i}(\log{p_i})}{\log(\frac{1+\sqrt{5}}{2})}+6 \le 6600.
$$
By repeating the above reduction step once more, now with $X_{02}=6600$, we obtain $n \le 2300$.\\

\noindent\textbf{Step VI - Brute force search:}
Now, we have proved that $m \le n \le 2300$, a value which is small enough to provide a brute force search.
Namely, we used trial division with primes up to $200$ for each quantity $F_n+F_m$. We found $324$ solutions.

Finally, we note that the total computational time of our algorithm for the sequence $\{F_n\}_{n\geq 0}$ was about four and a half
hours on a computer with an Intel Core 5 M3230 processor. The most time consuming step was the first LLL-algorithm in Step IV
which took almost four hours.
\end{proof}

\section{Appendix - Explicit bounds}\label{Sec:Appendix}
Let us denote by $P=\max_{1\leq i\leq s}\{p_i\}$.

Constants appearing in Lemma \ref{lem:bounds}:
\begin{gather*}
c_1=2\frac{|a|+|b|}{\sqrt{\Delta}}, \quad c_2=\frac{\ls{\frac{c_1}{|w|}}}{\la},\quad
c_3=\max\left\{\frac{\ls{\frac{4|b|\varphi}{|a|(\varphi-1)}}}{\log{|\alpha|}},\frac{\ls{\frac{4|b|\varphi}{
|a|(\varphi-1)}}}{\log{\frac{|\alpha|}{|\be|}}}\right\},\\
c_4=\frac{|a|(\varphi-1)}{2\varphi\sd}, \quad c_5=\frac{\log{\frac{|w|}{c_4}}}{\la}
\end{gather*}

The constant $C_1(p)$ from Bugeaud and Laurent's lower bound of linear forms in two $p$-adic logarithms:
$$C_1(p)=\frac{947 p^{f_\p}}{\log^4 p}.$$

Constants appearing in Proposition \ref{prop:p-adic} and its proof:
\begin{align*}
c_6=&\max\{c_3,17.5 \log|\alpha|(\max\{\log|2a\alpha|,\log|2b\beta|\}+0.24),P^{10},e^{10}\},\\
c_9=&\frac{\log |2a\alpha|}{\log p},\\
c_{10}=&\max\{\log|2a\alpha|,\log|2b\beta|,\log p\},\\
c_{11}=&\nu_p(\log_p (\alpha/\beta))+\frac{2}{\log p}+c_9,\\
c_{8,i}=&\max\left\{{C_1(p_i)\max\{\log|\alpha|,\log p_i\}c_{10}+c_9},c_{11}\right\},\\
c_7=&\frac{\sum_{i=1}^s{c_{8,i}\log{p_i}}}{\log |\alpha|}+c_5,
\end{align*}
where $P=\max_{1\leq i \leq s}\{p_i\}$.

Constants appearing in Proposition \ref{prop:n=m} and its proof:
\begin{align*}
c_{14}&=C_1(p)h'\left(\frac ba\right)h'\left(\frac{\beta}{\alpha}+\frac{\ls|a|}{\log p}\right)\\
c_{15}&=\frac{\sum_{i=1}^s c_{14} \log p_i}{\log |\alpha|}+c_5,\\
c_{16}&=\frac{\sum_{i=1}^s {\left(\nu_{p_i}(\log_{p_i}({\alpha}/{\beta}))\log{p_i}+(2+\ls|a|) \right) }   }{\log |\alpha|} \\
c_{13}&=\max\{4 c_{15} \log(4c_{15})^2,2(c_{5}+\log c_{16}),c_3,c_2,P^{10},e^{10}\},\\
c_{12,i}&=\frac{2\log |\alpha|}{\log p_i}c_{13}.
\end{align*}
 where
 $$h'\left(\frac ab \right)\leq \max\{\log |a|,\log|b|,\log P\},\quad \text{and} \quad h'\left(
\frac{\beta}{\alpha}\right)\leq \max\{\log |\alpha|,\log|\beta|,\log P\}.$$

 The constant $C_2(n)$ from Matveev's lower bound of linear forms in complex logarithms:
$$C_2(n)=2.31 \cdot 60^{n+3} n^{4.5}.$$

Constants appearing in the proof of Lemma \ref{lem:vanishing}
\begin{align*}
 c_{22}=&\frac{\log |b/a|}{\log |\alpha/\beta|}, \\
 c_{23}=& 1.52\cdot 10^{13} \max\{0.16, \ls\max\{|a|,|b|\} \} \log|\alpha|,\\
 c_{18}=&\max\{8c_7 c_{23}\log(27 c_7 c_{23})^3,c_6\}.
\end{align*}

Constants appearing in the rest of Section \ref{Sec:n>m}:
 \begin{align*}
c_{17}=&\frac{\log\left(2\left(1+\frac{2|b|}{|a|}\right)\right)}{\log\left(\min\left\{\frac{|\alpha|}{|\beta|}
,|\alpha| \right\}\right)},\\
  c_{19}=&\frac{2C_2(s+2)\log p_1 \dots \log p_s h(w\sd{a}^{-1})\log|\alpha|
+\log\left(1+\frac{2|b|}{|a|}\right)}{\log\left(\max\left\{\frac{|\alpha|}{|\beta|},|\alpha|
\right\}\right)},\\
  c_{20}=&\max\{8c_7 c_{19}\log(27 c_7 c_{19})^3,c_{18},c_6,c_2\},\\
  c_{21,i}=&\frac{2\log|\alpha|}{\log p_i} c_{20}.
 \end{align*}

 \def\cprime{$'$}

\end{document}